\documentclass[12pt,reqno]{amsart}
\usepackage{amsmath}
\usepackage{amssymb}
\usepackage{graphicx}
\usepackage{tikz-cd}
\usepackage[T1]{fontenc}
\usepackage[utf8]{inputenc}

\newcommand{\Sp}{\operatorname{Sp}}

\usepackage[colorlinks=true,
            citecolor={teal},
            linkcolor=blue,
            urlcolor=blue]{hyperref}

\usepackage[all]{xy}
\usepackage{xypic}
\usepackage{mathtools}

\usepackage{xspace}
\usepackage{bm}
\usepackage{amsmath}
\usepackage{amstext}
\usepackage{amsfonts}
\usepackage[mathscr]{euscript}
\usepackage{amscd}
\usepackage{latexsym}
\usepackage{amssymb}
\usepackage{enumerate}

\usepackage{booktabs}
\usepackage{tabularx}
\usepackage{multirow}
\usepackage{amsmath}

\usepackage{anysize}
\marginsize{2.5cm}{2.5cm}{2.5cm}{2.5cm}
\usepackage{setspace}

\theoremstyle{plain}
    \newtheorem{theorem}{Theorem}[section]
    \newtheorem{proposition}[theorem]{Proposition}
    \newtheorem{lemma}[theorem]{Lemma}
    
    \newtheorem{conjecture}[theorem]{Conjecture}

    \newtheorem{subsec}[theorem]{}
    
    \newtheorem*{thma}{Theorem A}
    \newtheorem*{thmb}{Theorem B}

    \newtheorem*{conj}{Conjecture}

\theoremstyle{definition}
    \newtheorem{construction}[theorem]{Construction}

\theoremstyle{remark}
        \newtheorem{remark}[theorem]{Remark}

\renewcommand{\thefigure}{\arabic{section}.\arabic{figure}}

\newenvironment{myeq}[1][]
{\stepcounter{figure}\begin{equation}\tag{\thefigure}{#1}}
{\end{equation}}

\newcommand{\eat}[1]{}

\allowdisplaybreaks

\title[Degrees of Maps and Cohomological Rigidity of Partial Flag Manifolds]{Degrees of Maps and Cohomological Rigidity of Partial Flag Manifolds}
\author[Manas Mandal]{Manas Mandal}
\address{Department of Mathematics, IIT Kanpur, Kanpur 208016, India}
\email{manas.imsc@gmail.com}

\subjclass{14M15, 55M25, 53C24}
\keywords{Grassmannians, partial flag manifolds, Brouwer degree,  cohomological rigidity}
\thanks{ }

\date{\today}                                          

\begin{document}

\begin{abstract}
We study the existence of continuous maps with nonzero Brouwer degree between partial flag manifolds. We prove that every continuous map between distinct complex or quaternionic partial flag manifolds has degree zero if either the domain or the codomain is not a Grassmannian. This establishes, for such partial flag manifolds, the analogue of the results of Ramani--Sankaran and Sankaran--Sarkar for complex and quaternionic Grassmannians, as well as an algebraic-geometric result of Paranjape and Srinivas for complex Grassmannians. As a consequence, we prove that complex and quaternionic partial flag manifolds are cohomologically rigid; that is, their rational cohomology rings determine their homeomorphism types. We conjecture that complex and quaternionic partial flag manifolds are homologically rigid.
\end{abstract}

\maketitle

\section{Introduction}

The existence of continuous maps of nonzero Brouwer degree between manifolds is a classical topic in topology. Such maps relate the topology of the domain and codomain in a strong way, inducing monomorphisms on rational cohomology and imposing significant constraints on their topology. When the rational cohomology rings of manifolds within a given class exhibit sufficient algebraic differences, these monomorphisms are ruled out, yielding non-existence results for maps of non-zero degree. Consequently, this leads to the fundamental question on cohomological rigidity: to what extent do cohomology algebras classify or determine manifolds in a given class?

In this paper, we apply this approach to complex and quaternionic partial flag manifolds, exploiting the structure of their rational cohomology, particularly in degree two, to study both nonzero-degree maps and cohomological rigidity.

In the algebraic-geometric setting, Paranjape and Srinivas \cite{PS89} proved that a finite surjective morphism between two complex Grassmannians can exist only if the two Grassmannians are isomorphic, unless the codomain is a projective space. In other words, there is no finite surjective morphism between two such distinct complex Grassmannians. In topological setting, Ramani and Sankaran \cite{RS97} initiated the study of nonzero degree maps between Grassmannians. They proved the nonexistence of nonzero degree maps between distinct complex (resp., quaternionic) Grassmannians in roughly half of the cases and conjectured that the same conclusion holds in the remaining cases. Subsequently, Sankaran and Sarkar \cite{SS09} settled the conjecture under an additional hypothesis. We refer to Table~\ref{tab:cases_classification} for a summary of the related developments. There have also been several other studies related to the existence and nonexistence of maps between various classes of manifolds; see, for example, \cite{Bar17, BS18, Can03, CS14, CM18, FR25, Kum25, Ram09, Sch99, SS09}.

The natural next step is to investigate the existence of nonzero degree maps between partial flag manifolds beyond the Grassmannian case. More precisely, we ask whether there exists a continuous map of nonzero degree between two distinct complex (or quaternionic) partial flag manifolds. We answer this question by proving Theorem A.

Let $\mathbb{F}$ denote either the field of complex numbers $\mathbb{C}$ or the skew field of quaternions $\mathbb{H}$, and let
$\nu=(n_1,\ldots,n_r)$ be a sequence of positive integers with
$n=n_1+\cdots+n_r$. Then the $\mathbb{F}$-partial flag manifold
$\mathbb{F}G(\nu)$ is the space of all orthogonal decompositions
$\mathbb{F}^n=V_1\oplus\cdots\oplus V_r$, where
$\dim_{\mathbb{F}}V_i=n_i$ for each $1\le i\le r$. The real dimension of $\mathbb FG(\nu)$ is given by $d\cdot\sum _{i<j}n_in_j$, where d is the real dimension of $\mathbb F$.

\begin{thma}[Theorem~\ref{main thm}, Proposition~\ref{prop r>2 s=2}]\label{intro: main thm}
    Let $f\colon \mathbb{F}G(\nu)\to\mathbb{F}G(\mu)$ be a continuous map between two partial flag manifolds of equal dimension, where $\nu=(n_1,\ldots,n_r)$ and $\mu=(m_1,\ldots,m_s)$ are not permutations of each other. Assume that $(r,s)\neq(2,2)$. Then the Brouwer degree of $f$ is zero.
\end{thma}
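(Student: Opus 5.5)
A map $f$ of nonzero degree induces an injective ring homomorphism $f^*\colon H^*(\mathbb{F}G(\mu);\mathbb{Q})\to H^*(\mathbb{F}G(\nu);\mathbb{Q})$. The plan is to rule out such a monomorphism by looking at the lowest nonvanishing degree, $2$ for $\mathbb{C}$ and $4$ for $\mathbb{H}$. Write $t=2$ or $4$ for this degree. We use the Borel presentation
\[
H^*(\mathbb{F}G(\nu);\mathbb{Q})\cong \mathbb{Q}[x_1,\dots,x_n]^{S_{n_1}\times\cdots\times S_{n_r}}\big/\bigl(\mathbb{Q}[x_1,\dots,x_n]^{S_n}_{+}\bigr),
\]
with $\deg x_i=t$. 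In the quaternionic case one replaces $x_i$ by $x_i^2$; it is cleaner to say that $H^*(\mathbb{H}G(\nu))$ is the complex ring of the same type with degrees doubled, so one argument covers both fields. The degree-$t$ part has dimension $r-1$, spanned by the first Chern (or symplectic Pontryagin) classes $c_1(V_i)$ subject to $\sum_i c_1(V_i)=0$. Poincaré duality and injectivity of $f^*$ mean that the top class maps to a nonzero multiple of the top class. Hence for any $u\in H^t(\mathbb{F}G(\mu))$ with $u^{N}\neq 0$, where $N=\dim_{\mathbb R}/t$, the image satisfies $(f^*u)^N\neq 0$.

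\textbf{Step 1: $r<s$ is impossible.} Here $f^*$ maps $H^t(\mathbb{F}G(\mu))$, of dimension $s-1$, into a space of dimension $r-1$. Injectivity of $f^*$ in degree $t$ then fails immediately.

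\textbf{Step 2: $r> s$.} I would exploit the nilpotency structure of degree-$t$ classes. For a class $u=\sum a_i c_1(V_i)$ in $\mathbb{F}G(\nu)$, the power $u^N$ vanishes on a union of linear subspaces (the ``null cone'' of the top-power form). I would compute, for each $\mathbb{F}G(\nu)$, the maximal dimension of a linear subspace $L\subset H^t$ on which the top-power form is identically zero, or more usefully the minimal $k$ such that every class satisfies $u^{k}=0$ on the various strata. The key available invariant is the nilpotency index of $c_1(V_i)$: it is $n-n_i+1$ up to symmetric-function relations, since $c_1(V_i)$ is pulled back from the Grassmannian $G(n_i,n)$ and $\dim G(n_i,n)=n_i(n-n_i)$. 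So for $r\ge3$, every generator $c_1(V_i)$ has nilpotency strictly less than $N+1$, being pulled back from a proper quotient. For $s=2$, i.e.\ a Grassmannian target, $H^t$ is one-dimensional, generated by $c=c_1(V_1)$ with $c^N\neq0$. Then $f^*c$ is a degree-$t$ class in $\mathbb{F}G(\nu)$ with $(f^*c)^N\neq0$, and also $(f^*c)$ must satisfy the relations satisfied by $c$; in particular $f^*$ must send the total class relation $c(V_1)c(V_2)=1$ to one in $\mathbb{F}G(\nu)$. I would proceed as in Ramani--Sankaran. The Grassmannian ring contains $c_j(V_1)$ for $j\le m_1$ with $\bar c(V_1)=c(V_1)^{-1}$ vanishing above degree $m_2$. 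The images $f^*c_j(V_1)$ are polynomials in the flag ring. Writing $f^*c_1=\lambda\cdot u$, one shows that the dual-class vanishing forces $f^*c_1$ to be a multiple of the first Chern class of a sub-bundle sum $W=\bigoplus_{i\in I}V_i$, exactly as in Sankaran--Sarkar's rigidity of degree-two maps. A class $c_1(W)$ with $I$ a proper nonempty subset factors through the Grassmannian $G(\dim W,n)$, whose dimension is strictly less than that of $\mathbb{F}G(\nu)$ when $r\ge3$. Therefore $c_1(W)^N=0$, a contradiction. For $r>s\ge3$, I would apply the same argument to each generator $c_1(U_j)$ of the target, obtaining the form $f^*c_1(U_j)=\lambda_j c_1(W_j)$. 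The nonvanishing of the top product $\prod_j c_1(U_j)^{a_j}$ then forces a partition refinement, giving $\mu$ as a coarsening of $\nu$. Combined with the dimension equality, this forces $r=s$.

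\textbf{Step 3: $r=s\ge3$.} The same ``$f^*c_1(U_j)$ is proportional to $c_1$ of a sub-sum'' structure, together with injectivity in degree $t$, makes the $W_j$ a partition of the summands, i.e.\ a permutation. Comparing Poincaré polynomials or the dimensions of the $W_j$ through the top Chern-class relations then gives $\mu$ as a permutation of $\nu$, a contradiction.

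The main obstacle is the claim that degree-$t$ images must have the form $\lambda\, c_1(\bigoplus_{i\in I}V_i)$. I would attack it by restricting to the fixed points of the maximal torus, or, more concretely, by restricting $f^*$ composed with pullbacks along the embeddings $\mathbb{F}P^1\hookrightarrow \mathbb{F}G(\nu)$ and along sub-flag manifolds $\mathbb{F}G(n_i,n_j)$. There I would use the known Grassmannian results (Paranjape--Srinivas, Ramani--Sankaran, Sankaran--Sarkar) on each two-block piece to pin down the coefficients $a_i$ as taking at most two values, $0$ and $\lambda$.
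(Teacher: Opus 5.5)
\textbf{The case $s=2$, $r\ge3$.} Your Step~2 breaks down here, and the problem is not only in the execution. The claim that $f^*c$ must be $\lambda\,c_1(W)$ with $W=\bigoplus_{i\in I}V_i$ is false. Worse, this case of the statement is false whenever the target is a projective space.

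\begin{itemize}
\item Take $\nu=(1,1,1)$ and $\mu=(1,3)$; both manifolds have complex dimension $3$.
\item In $H^*(\mathbb CP^3;\mathbb Q)=\mathbb Q[c]/(c^4)$ the only relation is $c^4=0$. Your ``dual-class vanishing'' is vacuous when $m_1=1$.
\item Hence $c\mapsto u$ is a monomorphism for every K\"ahler class $u=\sum a_ix_i$ with distinct integers $a_i$, and such $u$ is not of your sub-sum form.
\item This monomorphism is realized by a map. The argument of Construction~\ref{nonzero deg map to Pn} only needs a class $u\in H^2(X;\mathbb Z)$ with $u^{d}\neq0$. It therefore gives $g\colon\mathbb CG(1,1,1)\to\mathbb CP^3$ of degree $\pm\langle u^3,[\mathbb CG(1,1,1)]\rangle\neq0$.
\item Concretely, a generic linear projection of $\mathbb CG(1,1,1)\subset\mathbb CP^2\times\mathbb CP^2\subset\mathbb CP^8$ onto $\mathbb CP^3$ is finite and surjective of degree $6$.
\item For $\mathbb H$, apply Theorem~\ref{induced by topological map} to $u\mapsto w$, where $w\in H^4(\mathbb HG(1,1,1);\mathbb Q)$ satisfies $w^3\neq0$. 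This gives a map $\mathbb HG(1,1,1)\to\mathbb HP^3$ of nonzero degree.
\end{itemize}

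The paper's own proof of Proposition~\ref{prop r>2 s=2} has a matching flaw. From $f^*(B)\subseteq A$ one cannot pass ``by taking complements'' to $f^*(\mathcal S^2_{1,2})\supseteq\bigcup_{p<q}\mathcal R^2_{p,q}$, because $f^*$ is not onto $\mathcal R^2$. Indeed, the line spanned by a K\"ahler class meets each $\mathcal R^2_{p,q}$ only in $0$. To salvage $s=2$ you would at least have to assume $m_1\ge2$. You would also need relations of $H^*(\mathbb FG(m_1,m_2))$ above degree $t$, since all degree-$t$ information, heights included, is satisfied by any K\"ahler class.

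\textbf{The case $s\ge3$ (Theorem~\ref{main thm}, which is correct).} Your argument rests on the same unproven structural claim, and the tools you propose do not supply it. Restricting to sub-Grassmannians $\mathbb FG(n_i,n_j)$ or to copies of $\mathbb FP^1$ produces maps between manifolds of different dimensions. The nonzero-degree results of Paranjape--Srinivas, Ramani--Sankaran and Sankaran--Sarkar do not apply to such maps.

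The missing idea is to use that $f^*$ preserves heights in both directions, $u^k=0\iff(f^*u)^k=0$, together with Theorems~\ref{Kahler class description} and~\ref{heights of deg 2 elements}. The paper's argument then runs as follows.
\begin{itemize}
\item Classes of maximal height $d$ are exactly the complement of the braid arrangement $\{a_p=a_q\}$. So $f^*$ maps $\mathcal S^2\setminus\bigcup\mathcal S^2_{p,q}$ into $\mathcal R^2\setminus\bigcup\mathcal R^2_{p,q}$.
\item Each $(f^*)^{-1}(\mathcal R^2_{u,v})$ is therefore a hyperplane contained in $\bigcup\mathcal S^2_{p,q}$, hence equal to some $\mathcal S^2_{p,q}$.
\item It follows that the rows of a lift $\mathbb Q^s\to\mathbb Q^r$ of $f^*$ differ pairwise by nonzero multiples of roots $e_p-e_q$. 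Rank $s\ge3$ then forces the rows to form a simplex $\{R+\lambda(e_k-e_t)\}$.
\item This gives $r=s$ and $f^*y_i=\lambda x_{\sigma(i)}$ directly, so no coarsening argument is needed.
\item A class generic on the single hyperplane $\mathcal S^2_{i,j}$ has height $d-m_im_j$, and its image has height $d-n_{\sigma(i)}n_{\sigma(j)}$. Hence $m_im_j=n_{\sigma(i)}n_{\sigma(j)}$ for all $i<j$, which for $r\ge3$ forces $m_i=n_{\sigma(i)}$.
\end{itemize}
This last step should replace your Step~3 appeal to Poincar\'e polynomials and top Chern relations, which as written is not an argument.
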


As an immediate consequence, Theorem~A shows that the existence of a finite surjective morphism between two complex partial flag varieties, at least one of which is not a Grassmannian, determines their isomorphism type. Indeed, such a morphism necessarily has nonzero degree, and hence Theorem~A forces the domain and codomain to be isomorphic. This provides the analogue for such complex partial flag manifolds, of the result of Paranjape and Srinivas \cite[Proposition~6]{PS89} for complex Grassmannians.

An important consequence of Theorem~A is the following \textit{cohomological rigidity} theorem for complex and quaternionic partial flag manifolds. Determining the extent to which the cohomology algebra determines the underlying space, a phenomenon known as cohomological rigidity, is an important and active area of research. Such cohomological rigidity results for various classes of spaces have been established in \cite{CMS10, PS16, KS15, SS21,  HK22, HKM22, BCP25, CHJ25,  HKT26}.

\begin{thmb}[Theorem~\ref{prop:cohomological-rigidity}]
    Let
    $M=\mathbb{F}G(n_1,\ldots,n_r)$ and
    $N=\mathbb{F}G(m_1,\ldots,m_s)$
    be partial flag manifolds. If the cohomology algebras
    $H^*(M;\mathbb{Q})\cong H^*(N;\mathbb{Q})$
    as graded $\mathbb{Q}$-algebras, then $M$ and $N$ are homeomorphic.
\end{thmb}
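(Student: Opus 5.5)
The plan is to deduce Theorem~B from Theorem~A, together with the known Grassmannian results and one elementary invariant that separates the remaining cases. Suppose $H^*(M;\mathbb{Q})\cong H^*(N;\mathbb{Q})$ as graded algebras. First I read off numerical invariants. Equal top degree gives $\dim M=\dim N$. For $\mathbb{F}=\mathbb{C}$ the second Betti number of $\mathbb{C}G(\nu)$ is $r-1$, so $r=s$. For $\mathbb{H}$ the same count holds with $H^4$, whose rank is again $r-1$, since the rational cohomology is $\mathbb{Q}[p_1(V_i)]$-generated with the single relation coming from $\prod_i p(V_i)=1$. I also need to know that $\mathbb{F}$ is determined. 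A complex flag manifold has nonzero $H^2$ unless it is a point. A quaternionic one has cohomology concentrated in degrees divisible by $4$ and vanishing $H^2$. So, apart from trivial cases, the two manifolds use the same $\mathbb{F}$.

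Second, I use the isomorphism to produce a map of nonzero degree. Partial flag manifolds are rationally formal; they are homogeneous spaces $G/H$ of equal rank, hence formal. The graded algebra isomorphism is therefore realized, after rationalization, by a rational homotopy equivalence $M_{\mathbb{Q}}\to N_{\mathbb{Q}}$. By the standard realization argument (for example Sullivan's theorem that a suitable integer multiple of an automorphism of the minimal model is realized by a genuine map between simply connected finite complexes), some nonzero multiple of the isomorphism, up to the grading-rescaling automorphisms $x\mapsto \lambda^{k}x$ in degree $2k$ (resp.\ $4k$), is induced by a continuous map $f\colon M\to N$. The induced map on top cohomology is a nonzero multiple of an isomorphism, so $\deg f\neq 0$. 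This realization step is the main technical point. The cleanest route is to cite the integral-multiple realization for formal spaces, noting that composing the isomorphism with the rescaling automorphisms keeps the degree nonzero.

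Third, I finish by cases. If $r=s=2$, both manifolds are Grassmannians of the same dimension with isomorphic cohomology rings. Here I would not need Ramani--Sankaran/Sankaran--Sarkar; I can argue directly. The cohomology of $\mathbb{F}G(k,n-k)$ determines $\{k,n-k\}$, since it is a truncated polynomial-type ring whose generator count and Poincaré polynomial $\binom{n}{k}_q$ recover $n$ and $\min(k,n-k)$. Hence $\nu$ is a permutation of $\mu$. If $(r,s)\neq(2,2)$, then since $r=s$ is forced, Theorem~A says a nonzero-degree map exists only if $\nu$ and $\mu$ are permutations of each other. In all cases $\mu$ is therefore a permutation of $\nu$. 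Finally, permuting the summands $V_i$ of the decomposition $\mathbb{F}^n=V_1\oplus\cdots\oplus V_r$ gives a diffeomorphism $\mathbb{F}G(\nu)\cong\mathbb{F}G(\mu)$, and in particular a homeomorphism.
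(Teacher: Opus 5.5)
Your proposal is correct and follows essentially the same route as the paper. Both arguments get $r=s$ from the rank of $H^2$ (resp.\ $H^4$), then use formality to realize the isomorphism, up to a grading automorphism, by a map of nonzero degree; the paper cites Glover--Homer's Theorem~1.2, which is this same kind of realization result. Both then apply Theorem~A when $r=s\ge 3$ and compare Betti numbers directly for Grassmannians. Your observation that the first degree where the Poincar\'e polynomial falls below the partition count detects $\min(k,n-k)$ is the same as the paper's comparison of $\dim H^{2(m_1+1)}$. Your explicit remark that equal-rank homogeneous spaces are formal supplies a hypothesis of the realization theorem that the paper leaves implicit.
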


The proof of Theorem A is based on the fact that a continuous map of nonzero degree induces a monomorphism in rational cohomology. In the setting of complex partial flag manifolds, the induced map on the second rational cohomology preserves the complement of the braid hyperplane arrangement. This preservation property imposes strong rigidity, forcing the induced monomorphism on the cohomology algebra to be an automorphism. The proof is then completed by exploiting the heights of degree-two cohomology classes and the description of Kähler classes in the cohomology algebra of complex partial flag manifolds studied in \cite{BHH83}. 

The quaternionic case of Theorem A follows from the proof for the complex case. This is possible because the rational cohomology algebra of a quaternionic partial flag manifold has the same algebraic presentation as that of the corresponding complex partial flag manifold, with the only difference being that the generators occur in twice the degree.

Theorem B follows by combining Theorem A with the realization theorem of \cite[Theorem 1.2]{GH81}, which ensures the realization of suitable isomorphisms between rational cohomology algebras by continuous maps.

Finally, we propose the following conjecture on the \emph{homological rigidity} of complex and quaternionic partial flag manifolds, asserting that the assumption of an isomorphism of cohomology algebras in Theorem~B can be weakened to isomorphisms of all cohomology groups, or equivalently, isomorphisms of all homology groups.

\begin{conj}
    Let $\mathbb{F}G(\nu)$ and $\mathbb{F}G(\mu)$ be partial flag manifolds. If the homology groups $H_i\bigl(\mathbb{F}G(\nu);\mathbb{Z}\bigr)
    \cong
    H_i\bigl(\mathbb{F}G(\mu);\mathbb{Z}\bigr)$
    for every $i$, then $\mathbb{F}G(\nu)$ and $\mathbb{F}G(\mu)$ are homeomorphic.
\end{conj}

We verify Conjecture for Grassmannians and almost complete flag manifolds.

\section{Preliminaries}

\subsection{Partial flag manifolds}
Let $\nu=(n_1,\ldots,n_r)$ be a nontrivial partition of $n$. We denote by $\mathbb{C}G(\nu)$ the partial complex flag manifold of type $\nu$, that is, the homogeneous space
$\mathbb{C}G(\nu)=U(n)/(U(n_1)\times\cdots\times U(n_r))$. Equivalently, it is the space of all orthogonal decompositions $\mathbb{C}^n=V_1\oplus\cdots\oplus V_r$ such that $\dim_{\mathbb C}V_i=n_i$ for each $i$.
The complex dimension of $\mathbb CG(\nu)$ is given by $\sum_{ i<j}n_in_j.$

For $j=1,\ldots,r$, let $\gamma_j$ be the canonical complex vector bundle of rank $n_j$ over $\mathbb{C}G(\nu)$, whose fibre at
$V_1\oplus\cdots\oplus V_r\in\mathbb{C}G(\nu)$ is $V_j$. Since
$\mathbb{C}^n=V_1\oplus\cdots\oplus V_r$ at every point of $\mathbb{C}G(\nu)$, we have
$\gamma_1\oplus\cdots\oplus\gamma_r \cong n\varepsilon_{\mathbb C},$
where $\varepsilon_{\mathbb C}$ denotes the trivial complex line bundle. Taking total Chern classes on both sides of the bundle isomorphism, one obtains
\begin{equation}
    \prod_{1 \leq j \leq r}\big  
    (1+c_1(\gamma_j)+c_2(\gamma_j)+\cdots+c_{n_j}(\gamma_j)\big) = 1.
\end{equation}

The cohomology ring $ H^*\big(\mathbb{C} G(\nu); \mathbb{Z}\big) $ is given by
\[
H^*\big(\mathbb{C} G(\nu); \mathbb{Z}\big) \;\cong\; \mathbb{Z}[c_{i,j}: 1 \leq i \leq n_j, 1 \leq j \leq r] \Big/ \langle h_1, \dots, h_n \rangle,
\]
where the elements $ h_j $ are defined by the equation
\[
\sum_{0 \leq i \leq n} h_i t^i = \prod_{1 \leq j \leq s} \left( 1 + c_{1,j} t + \cdots + c_{n_j,j} t^{n_j} \right),
\]
and   $ c_{i,j} $ corresponds to $ c_i(\gamma_j) $ under the isomorphism.

For the complex Grassmannian $\mathbb CG(n_1,n_2)$, the cohomology algebra 
\begin{equation}\label{cohom c_i bar c_i}
    H^*\big(\mathbb CG(n_1,n_2);\mathbb Z\big)
    \cong
    \mathbb Z[c_1,\ldots,c_{n_1},\bar c_1,\ldots,\bar c_{n_2}]
    \Big/
    \langle h_1,\ldots,h_{n_1+n_2}\rangle,
\end{equation}
where $c_i$ and $\bar c_i$ corresponds to the $i$-th Chern classes of the canonical complex vector bundles $\gamma_1$ and $\gamma_2$, respectively. The relations are determined by the identity
\[
h_i:=\sum_{p+q=i}c_p\bar c_q=0,\qquad
1\le i\le n_1+n_2,
\]
where $c_0=\bar c_0=1$ and $c_p=\bar c_q=0$ whenever the indices are out of range.

Since the coefficient of $\bar c_i$ in the relation $h_i=0$ is $1$ for $1\le i\le n_2$, these relations can be solved inductively to express $\bar c_1,\ldots,\bar c_{n_2}$ as polynomials in $c_1,\ldots,c_{n_1}$. Substituting these expressions into the remaining relations $h_{n_2+1},\ldots,h_{n_1+n_2}$ yields an equivalent presentation involving only the generators $c_1,\ldots,c_{n_1}$:
\begin{equation}\label{cohom c_i}
    H^*\big(\mathbb CG(n_1,n_2);\mathbb Z\big)
    \cong
    \mathbb Z[c_1,\ldots,c_{n_1}]
    \Big/
    \langle h_{n_2+1},\ldots,h_{n_1+n_2}\rangle,
\end{equation}
where each $h_i$ is understood to be written entirely in terms of $c_1,\ldots,c_{n_1}$ after eliminating the variables $\bar c_1,\ldots,\bar c_{n_2}$.

The quaternionic partial flag manifold of type $\nu=(n_1,\ldots,n_r)$ is the homogeneous space
$\mathbb HG(\nu):=\Sp(n)/\Sp(n_1)\times\cdots\times\Sp(n_r).$ As in the complex case, there are canonical quaternionic vector bundles $\eta_i$, $1\le i\le r$, of quaternionic rank $n_i$.

The integral cohomology of $\mathbb HG(\nu)$ admits a presentation analogous to that of the complex partial flag manifold:
\begin{equation}\label{cohomology quaternionic flag}
    H^*\big(\mathbb HG(\nu);\mathbb Z\big)
    \cong
    \mathbb Z\big[q_{i,j}:1\le i\le n_j,\;1\le j\le r\big]\Big/\mathcal I_{\mathbb H},
\end{equation}
where $\mathcal I_{\mathbb H}$ is generated by the positive-degree homogeneous components of the identity
$$\prod_{j=1}^{r}(1+q_{1,j}+\cdots+q_{n_j,j})=1, \quad \text{where $\deg q_{i,j}=4i.$}$$

Equivalently, the presentation \eqref{cohomology quaternionic flag} is obtained from that of the complex partial flag manifold by replacing each generator $c_{i,j}$ with $q_{i,j}=c_{2j}(\eta_i)$, thereby doubling the degree of every generator while leaving the defining polynomial relations unchanged. Consequently, the cohomology algebra of $\mathbb HG(\nu)$ may be viewed as the ``degree-doubled'' analogue of the cohomology algebra of $\mathbb CG(\nu)$, a fact that will be used repeatedly in this article.

\subsection{Degree of a map}

Let $M$ and $N$ be connected, closed, orientable manifolds of the same
dimension $d$, and let $f\colon M\to N$ be a continuous map. The
\emph{Brouwer degree} of $f$, denoted by $\deg(f)$, is the unique integer
determined by
\[
f^*([N])=\deg(f)\,[M],
\]
where $[M]\in H^d(M;\mathbb{Z})$ and
$[N]\in H^d(N;\mathbb{Z})$ are the fundamental cohomology classes.

The following well-known facts will be used repeatedly.

\begin{lemma}\label{degree-lemma}
    Let $M$ and $N$ be connected, closed, orientable manifolds of equal dimension, and let $f\colon M\to N$ be a continuous map. Then
    \[
    \deg(f)\neq 0
    \quad\Longleftrightarrow\quad
    f^*\colon H^*(N;\mathbb{Q})\to H^*(M;\mathbb{Q})
    \text{ is injective}.
    \]
\end{lemma}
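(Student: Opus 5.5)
The plan is to prove both implications using Poincaré duality over $\mathbb{Q}$ together with the fact that $f^*$ is a ring homomorphism. Let $d=\dim M=\dim N$. Fix fundamental classes, so that $H^d(M;\mathbb{Q})\cong\mathbb{Q}$ is generated by $[M]$ and $H^d(N;\mathbb{Q})\cong\mathbb{Q}$ is generated by $[N]$. Connectedness and orientability give these one-dimensional top groups. Throughout, $\deg(f)$ is read off from $f^*[N]=\deg(f)[M]$.

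\emph{($\Leftarrow$).} Assume $f^*$ is injective on rational cohomology. Since $[N]\neq 0$ in $H^d(N;\mathbb{Q})$, injectivity gives $f^*[N]\neq 0$. Hence $\deg(f)[M]\neq 0$ in $H^d(M;\mathbb{Q})$, so $\deg(f)\neq 0$. This direction is immediate; the only point to note is that $[N]$ has infinite order integrally, so it is nonzero rationally.

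\emph{($\Rightarrow$).} Assume $\deg(f)\neq 0$ and let $0\neq x\in H^k(N;\mathbb{Q})$.
\begin{enumerate}
\item Since $N$ is closed, connected and orientable, the cup product pairing $H^k(N;\mathbb{Q})\times H^{d-k}(N;\mathbb{Q})\to H^d(N;\mathbb{Q})\cong\mathbb{Q}$ is nondegenerate. This is Poincaré duality combined with the universal coefficient theorem over a field.
\item By nondegeneracy there is $y\in H^{d-k}(N;\mathbb{Q})$ with $x\smile y=[N]$, after rescaling $y$.
\item Naturality of cup products gives
\[
f^*x\smile f^*y=f^*(x\smile y)=f^*[N]=\deg(f)[M]\neq 0,
\]
where the last step uses that $\deg(f)$ is a nonzero integer, hence a unit in $\mathbb{Q}$.
\item Therefore $f^*x\neq 0$, and $f^*$ is injective in every degree.
\end{enumerate}

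The main obstacle, such as it is, is justifying the nondegeneracy of the rational cup product pairing in step 1. This is standard: Poincaré duality identifies $H^k$ with $H_{d-k}$ via cap product with the fundamental class. Over a field, $H^{d-k}\cong\operatorname{Hom}(H_{d-k},\mathbb{Q})$. Composing these isomorphisms gives the perfect pairing.

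One could alternatively phrase the argument via the umkehr (transfer) map $f_!=PD_M\circ f_*\circ PD_N^{-1}$-type construction. For any $x$ this satisfies $f_!f^*x=\deg(f)\,x$, which exhibits $f^*$ as split injective after inverting $\deg(f)$. I would use the cup-product argument above since it is shorter.
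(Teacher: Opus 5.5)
Your proof is correct in both directions. The paper gives no proof of this lemma; it records it as a well-known fact and points to Spanier, so there is no competing argument to compare against. Your route is the standard proof such a reference supplies: nondegeneracy of the rational cup-product pairing on $N$ via Poincaré duality and universal coefficients, then $f^*x\smile f^*y=f^*[N]=\deg(f)[M]\neq 0$, with the converse read off directly from $f^*[N]=\deg(f)[M]$. The one step you leave implicit is the cap/cup compatibility that turns Poincaré duality plus universal coefficients into a perfect pairing, and that is standard.
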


We refer the reader to \cite{MS74} for characteristic classes and to \cite{Spa66} for the Brouwer degree and other basic notions in algebraic topology.

\section{Degrees of maps between partial flag manifolds}

In this section, we study continuous maps of nonzero Brouwer degree between partial flag manifolds over $\mathbb{F}\in{\mathbb{C},\mathbb{H}}$. Our main result shows that any map between two distinct partial flag manifolds of the same dimension, one of which is not a Grassmannian, has degree zero. The proof combines the fact that a map of nonzero degree induces an injective homomorphism on rational cohomology with the observation that, in our setting, such maps preserve the complements of the corresponding braid hyperplane arrangements.

Let $\mathbb{C}G(\nu)$ be the complex partial flag manifold of type
$\nu=(n_1,n_2,\ldots,n_r)$. For each $i$, let
$x_i\in H^2(\mathbb{C}G(\nu);\mathbb{Z})$ denote the first Chern class of the canonical complex vector bundle of rank $n_i$ over $\mathbb{C}G(\nu)$. 

Now we begin by recalling the following results from \cite{BHH83}.

\begin{theorem}[Theorem 2.3, Corollary 3.2, \cite{BHH83}]\label{Kahler class description}
    A cohomology class of the form
    \[
    \sum_{i=1}^{\ell} a_i x_i \in H^2(\mathbb{C}G(\nu);\mathbb{Z})
    \]
    is a K\"ahler class if and only if the integers
    $a_1,a_2,\ldots,a_\ell$ are all distinct.
\end{theorem}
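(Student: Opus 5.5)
The plan is to follow Borel--Hirzebruch and reduce everything to a computation with invariant forms on the homogeneous space $\mathbb{C}G(\nu)=U(n)/(U(n_1)\times\cdots\times U(n_r))$. Throughout I read ``K\"ahler class'' as a class represented by a K\"ahler form for some $U(n)$-invariant complex structure on $\mathbb{C}G(\nu)$. I also take $\ell=r$ and use the relation $x_1+\cdots+x_r=0$, so that the coefficients $(a_i)$ are determined only up to adding a common constant. The condition ``all $a_i$ distinct'' is invariant under that shift, so the statement is well posed.

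\textbf{Step 1: the invariant complex structures.} At the base point, the real tangent space of $\mathbb{C}G(\nu)$ is $\bigoplus_{i<j}\operatorname{Hom}(V_i,V_j)$, and it decomposes into pairwise inequivalent irreducible isotropy modules. By the Borel--Hirzebruch classification, each invariant (integrable) complex structure comes from a total ordering $\sigma$ of the blocks $\{1,\dots,r\}$. Concretely, it corresponds to identifying $\mathbb{C}G(\nu)$ with $GL_n(\mathbb{C})/P_\sigma$, the variety of flags $0\subset W_1\subset\cdots\subset W_r=\mathbb{C}^n$ with $W_k=V_{\sigma(1)}\oplus\cdots\oplus V_{\sigma(k)}$. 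The holomorphic tangent space is then $\bigoplus_{\sigma^{-1}(i)<\sigma^{-1}(j)}\operatorname{Hom}(V_i,V_j)$, with the orientation of each summand fixed accordingly.

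\textbf{Step 2: reduction to invariant forms.} The group $U(n)$ is connected, so it acts trivially on cohomology. Averaging a K\"ahler form over $U(n)$ with respect to Haar measure gives an invariant K\"ahler form in the same class for the same invariant complex structure. So it suffices to understand invariant closed real $(1,1)$-forms. By Schur's lemma, such a form is determined at the base point by one real scalar $\lambda_{ij}$ on each isotropy summand $\operatorname{Hom}(V_i,V_j)$. Closedness forces $\lambda_{ij}=b_i-b_j$ for some reals $b_i$; this is the standard curvature computation for the homogeneous Hermitian line bundles $\det\gamma_i$.

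I then identify cohomology classes directly. Put the invariant metric on $\det\gamma_i$; its Chern form computes $x_i$, and the resulting invariant form has coefficient $\pm(\delta_{ik}-\delta_{jk})$ on $\operatorname{Hom}(V_j,V_k)$. Hence the class $\sum a_ix_i$ is represented by the invariant form with $\lambda_{ij}=c\,(a_i-a_j)$ for a fixed nonzero normalising constant $c$. Equivalently, and more algebraically, one can use Kodaira together with Borel--Weil: the bundle $\bigotimes(\det\gamma_i)^{a_i}$ is ample on $GL_n/P_\sigma$ iff its weight is strictly $P_\sigma$-dominant, i.e.\ iff $a_{\sigma(1)}>\cdots>a_{\sigma(r)}$ up to a global sign convention.

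\textbf{Step 3: positivity and conclusion.} An invariant real $(1,1)$-form is positive for the complex structure $\sigma$ iff each $\lambda_{ij}$, with the sign taken relative to the holomorphic orientation of the summand, is positive. This translates to $a_{\sigma(1)}>a_{\sigma(2)}>\cdots>a_{\sigma(r)}$, or the reverse chain, depending on the sign convention for $\gamma_i$ versus the quotient bundles. The two directions are then immediate:
\begin{itemize}
\item If $\sum a_ix_i$ is K\"ahler for some $\sigma$, the $a_i$ are strictly ordered and hence distinct.
\item Conversely, if the $a_i$ are distinct, let $\sigma$ be the ordering that sorts them. The corresponding invariant form is positive for the complex structure $\sigma$, so the class is K\"ahler.
\end{itemize}

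\textbf{Main obstacle.} I expect Step 1 to be the hardest part: showing that every invariant complex structure arises from an ordering $\sigma$, and keeping the signs and orientations consistent. In particular one must check that the sign conventions for $c_1(\gamma_i)$ match the holomorphic orientation, so that both the increasing and the decreasing chains are covered. For this I would first try invoking Borel--Hirzebruch's description of invariant complex structures via choices of positive root systems containing the roots of the Levi factor. For $\mathfrak{gl}_n$, those choices are exactly the orderings of the blocks.
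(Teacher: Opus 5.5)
The paper does not prove this statement. It is quoted from \cite{BHH83}, and the only related tool the paper states is the height formula, Theorem~\ref{heights of deg 2 elements}.

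Your proposal is a correct version of the classical Borel--Hirzebruch invariant-form argument, under your reading that ``K\"ahler'' means K\"ahler for \emph{some} complex structure. That reading is forced: for a fixed $J_\sigma$ the K\"ahler classes form the single chamber $a_{\sigma(1)}<\cdots<a_{\sigma(r)}$ (with $x_i=c_1(\gamma_i)$ and your $W_k$), not the whole complement of the hyperplanes $a_i=a_j$.

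Your route is heavier and narrower than necessary in the ``only if'' direction. Averaging plus the classification in Step~1 only excludes K\"ahler forms for \emph{invariant} complex structures, and that classification is exactly what you flagged as the main obstacle. It can be bypassed:
\begin{itemize}
\item If $a_i=a_j$, then $\sum_k a_kx_k=\pi^*y$, where $\pi\colon\mathbb{C}G(\nu)\to\mathbb{C}G(\nu')$ merges the $i$-th and $j$-th blocks; $\pi^*$ of the first Chern class of the merged bundle is $x_i+x_j$.
\item The base has complex dimension $d-n_in_j<d$, so $(\sum_k a_kx_k)^d=\pi^*(y^d)=0$. Equivalently, the height is $<d$ by Theorem~\ref{heights of deg 2 elements}.
\item But $\int\omega^d>0$ for every K\"ahler form.
\end{itemize}
This rules out K\"ahler (indeed symplectic) representatives for every complex structure, invariant or not. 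It is also how the paper uses the result: the classes of maximal height are exactly those with distinct coefficients.

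For the ``if'' direction you only need the existence of $J_\sigma$ and one positive closed form, and your Kodaira/Borel--Weil aside is the quickest way to get it. Sort so that $a_{\sigma(1)}<\cdots<a_{\sigma(r)}$ and use $\gamma_{\sigma(k)}\cong W_k/W_{k-1}$ to get
\[
\sum_{i=1}^r a_ix_i=\sum_{k=1}^{r-1}\bigl(a_{\sigma(k+1)}-a_{\sigma(k)}\bigr)\bigl(-c_1(W_k)\bigr).
\]
This is a positive combination of pulled-back Pl\"ucker hyperplane classes under the embedding $GL_n(\mathbb{C})/P_\sigma\hookrightarrow\prod_{k<r}\mathrm{Gr}(\dim W_k,n)$. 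Restricting a product Fubini--Study form gives a K\"ahler representative. It also settles the sign you left open: it is the increasing chain.

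One small correction to Step~2. The curvature computation shows only that forms with $\lambda_{ij}=b_i-b_j$ are closed and represent $\sum b_ix_i$ up to your constant $c$. Your version of ``only if'' needs the converse, that closedness forces this shape. The clean justification is that invariant exact $2$-forms vanish: a primitive can be averaged to an invariant $1$-form, and there are none because the isotropy representation has no trivial summand. Hence invariant closed $2$-forms inject into $H^2$, which has dimension $r-1$, and the $(r-1)$-dimensional family of curvature forms exhausts them.
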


\begin{theorem}[Theorem 3.1, \cite{BHH83}]\label{heights of deg 2 elements}
    Consider a cohomology class of the form
    \[
    x=\sum_{i=1}^{\ell} a_i x_i \in H^2(\mathbb{C}G(\nu);\mathbb{Z}).
    \]
    Let $\{b_1,b_2,\ldots,b_t\}$ denote the set of distinct values among the integers
    $a_1,a_2,\ldots,a_\ell$. For each $1\le j\le t$, define $m_j=\sum_{a_i=b_j} n_i.$
    Then the height of $x$ is given by
    \[
    h(x)=\sum_{1\le i<j\le t} m_i m_j.
    \]
\end{theorem}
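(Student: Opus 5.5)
The plan is to reduce to the case where all coefficients are distinct, and then use Theorem~\ref{Kahler class description}, after first making the convention precise. Here the height $h(x)$ is the largest $k$ with $x^k\neq 0$ in $H^*(\mathbb{C}G(\nu);\mathbb{Q})$. We group the indices according to the distinct values $b_1,\ldots,b_t$ of the coefficients, setting $I_j=\{i: a_i=b_j\}$ and $m_j=\sum_{i\in I_j}n_i$. The partition $\mu=(m_1,\ldots,m_t)$ of $n$ is a coarsening of $\nu$, up to reordering of the blocks, which does not affect the flag manifold.

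There is a natural projection
\[
\pi\colon \mathbb{C}G(\nu)\to\mathbb{C}G(\mu),\qquad (V_1,\ldots,V_r)\mapsto \Bigl(\bigoplus_{i\in I_1}V_i,\ldots,\bigoplus_{i\in I_t}V_i\Bigr).
\]
It is a smooth fibre bundle with fibre $\prod_j \mathbb{C}G(\nu|_{I_j})$. Let $\delta_j$ denote the canonical bundles on $\mathbb{C}G(\mu)$ and put $y_j=c_1(\delta_j)$. Then $\pi^*\delta_j\cong\bigoplus_{i\in I_j}\gamma_i$, so $\pi^*y_j=\sum_{i\in I_j}x_i$ by the Whitney sum formula. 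Consequently
\[
x=\pi^*y,\qquad y=\sum_{j=1}^t b_j y_j .
\]

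The next step is to show that $\pi^*$ is injective on rational cohomology. The restrictions to the fibre of the Chern classes $c_k(\gamma_i)$ are exactly the canonical Chern classes of the fibre flag manifolds, and these generate the fibre's cohomology. The Leray--Hirsch theorem therefore gives that $H^*(\mathbb{C}G(\nu);\mathbb{Q})$ is a free $H^*(\mathbb{C}G(\mu);\mathbb{Q})$-module via $\pi^*$, so $\pi^*$ is injective. (Alternatively, use that the fibre has nonzero Euler characteristic and apply a transfer argument.) Hence $x^k\neq 0$ if and only if $y^k\neq 0$, and it suffices to compute the height of $y$ on $\mathbb{C}G(\mu)$.

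Let $D=\dim_{\mathbb{C}}\mathbb{C}G(\mu)=\sum_{i<j}m_im_j$. Since $H^{2k}(\mathbb{C}G(\mu))=0$ for $k>D$, we get $y^{D+1}=0$. Since the $b_j$ are distinct, Theorem~\ref{Kahler class description} applied to $\mathbb{C}G(\mu)$ shows that $y$ is a Kähler class, represented by a Kähler form $\omega$. Then $\int \omega^D>0$ because $\omega^D/D!$ is a volume form, so $y^D\neq 0$. If $t=1$, then $x=b_1\sum_i x_i=0$, and the formula gives $0$, consistent with $D=0$. Thus $h(x)=h(y)=D$.

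The step needing most care is the invocation of Theorem~\ref{Kahler class description}, because Kähler classes are only determined up to the sign and normalization conventions for $x_i$. If the theorem only yields that $\pm y$ is Kähler, this suffices, since $(\pm y)^D\neq 0$ either way. The injectivity of $\pi^*$ is standard once the fibre cohomology is seen to be generated by the restricted Chern classes.
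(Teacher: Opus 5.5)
The paper does not prove this statement. It is quoted from \cite[Theorem~3.1]{BHH83} and used as a black box, so there is no argument in the paper to compare with step by step.

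Your proof is correct, and it is the standard one. You write $x=\pi^*y$ for the coarsening map $\pi\colon\mathbb{C}G(\nu)\to\mathbb{C}G(\mu)$. You note that $\pi^*$ is injective on rational cohomology, by Leray--Hirsch or by the transfer, since the fibre has nonzero Euler characteristic. Hence $h(x)=h(y)$. Finally $h(y)=\dim_{\mathbb{C}}\mathbb{C}G(\mu)=\sum_{i<j}m_im_j$ because $y$ has distinct coefficients.

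Compared with the bare citation, your argument explains the formula: $h(x)$ is the complex dimension of the coarsest flag manifold from which $x$ is pulled back. It also shows that, within the paper, Theorem~\ref{heights of deg 2 elements} follows from Theorem~\ref{Kahler class description}, and only from its implication ``distinct $\Rightarrow$ K\"ahler''.

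One caution. The paper attributes Theorem~\ref{Kahler class description} partly to Corollary~3.2 of \cite{BHH83}, which may well be deduced there from the very height formula you are proving. To be safe, justify that implication directly. This is easy, and it also handles your convention worry more fully: for $t\ge 3$ the issue is the ordering of the $b_j$, not only a sign.
\begin{itemize}
\item Relabel the blocks of $\mu$ so that $b_1>\cdots>b_t$, and let $U_k=\delta_1\oplus\cdots\oplus\delta_k$.
\item Abel summation together with $\sum_j y_j=0$ gives $y=\sum_{k=1}^{t-1}(b_k-b_{k+1})\,c_1(U_k)$.
\item Each $-c_1(U_k)$ is the pullback of the hyperplane class under the Pl\"ucker map of $U_k$.
\item These maps jointly embed $\mathbb{C}G(\mu)$ holomorphically into a product of projective spaces.
\end{itemize}
Hence $-y$ is the restriction of a positive combination of Fubini--Study classes. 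It is therefore K\"ahler, and $y^{D}\neq 0$ with $D=\dim_{\mathbb{C}}\mathbb{C}G(\mu)$.
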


We now prove a series of lemmas that prepare the way for the proof of the main theorem.

\begin{lemma}\label{ninj=mimj}
    Let $\nu = (n_1, n_2, \ldots, n_r)$ and $\mu = (m_1, m_2, \ldots, m_r)$ be two finite sequences of natural numbers such that $n_i n_j = m_i m_j$ for all $1 \le i < j \le r$. If $r \ge 3$, then $n_i = m_i$ for all $i = 1, 2, \ldots, r$.
\end{lemma}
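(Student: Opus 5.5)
Since $r\ge 3$, every index $i$ lies in some triple $\{i,j,k\}$ of three distinct indices, so it suffices to show $n_i=m_i$ inside one triple. Fix distinct $i,j,k$. The hypothesis gives the three identities
\[
n_in_j=m_im_j,\qquad n_in_k=m_im_k,\qquad n_jn_k=m_jm_k .
\]

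Multiply the first two and divide by the third, which is legitimate because all entries are natural numbers and hence positive. This gives
\[
n_i^2=\frac{(n_in_j)(n_in_k)}{n_jn_k}=\frac{(m_im_j)(m_im_k)}{m_jm_k}=m_i^2 .
\]
Since $n_i,m_i>0$, we conclude $n_i=m_i$. As $i$ was arbitrary, $\nu=\mu$.

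There is no real obstacle; the only points to check are these:
\begin{enumerate}[(i)]
\item Positivity is needed both for the division and for taking square roots; it comes from the entries being natural numbers, i.e.\ positive integers as in the setting of partitions.
\item The hypothesis $r\ge3$ is what guarantees a third index $k$. For $r=2$ the lemma fails, e.g.\ $(1,4)$ and $(2,2)$ both have $n_1n_2=4$, which is consistent with the $(r,s)=(2,2)$ exclusion in Theorem~A.
\end{enumerate}
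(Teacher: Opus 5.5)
Your proof is correct and is essentially the paper's argument. The paper also multiplies $n_in_j=m_im_j$ by $n_in_k=m_im_k$, substitutes $n_jn_k=m_jm_k$, and cancels the positive factor $m_jm_k$ to get $n_i^2=m_i^2$, hence $n_i=m_i$.
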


\begin{proof}
    Let $i, j, k$ be three distinct indices in $\{1, 2, \ldots, r\}$. By the given hypothesis, we have the following system of equations:
    \begin{align}
        n_i n_j &= m_i m_j \label{eq1} \\
        n_j n_k &= m_j m_k \label{eq2} \\
        n_i n_k &= m_i m_k \label{eq3}
    \end{align}
    
    Multiplying equations \eqref{eq1} and \eqref{eq3} yields: $n_i^2 (n_j n_k) = m_i^2 (m_j m_k)$.
    Using \eqref{eq2} in the expression gives $n_i^2 (m_j m_k) = m_i^2 (m_j m_k).$
     Dividing both sides by $m_j m_k$, we get $n_i^2 = m_i^2$ which implies $n_i=m_i.$
     Since the choice of  $i$ was arbitrary, $n_i=m_i$ holds for all $i$.
\end{proof}

\begin{lemma}\label{row - row is scaling and shift}
    Let $A=[R_1,\ldots,R_n]^T$ be a real $n\times m$ matrix of rank $m\ge 3$. Suppose that for every $a\neq b$ there exist $i\neq j$ and $\lambda_{a,b}> 0$ such that
    \[
    R_a-R_b=\lambda_{a,b}(e_i-e_j),
    \]  where $\{e_1,\ldots , e_n\}$ denotes the standard basis of $\mathbb R^n$.
    Then $m=n$ and there exist $R\in\mathbb R^m$, $\lambda\in \mathbb R\setminus\{0\}$, and
    $k\in\{1,\ldots,m\}$ such that the set of all rows
    \[
    \{R_1,\ldots,R_m\}\subseteq
    \{\,R+\lambda(e_k-e_t):1\le t\le m\,\}.
    \]
\end{lemma}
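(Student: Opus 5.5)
The plan is to read the rows as points $R_a\in\mathbb R^m$ whose pairwise differences are positive multiples of roots $e_i-e_j$ of type $A_{m-1}$. I interpret the $e_i$ as the standard basis of $\mathbb R^m$, the space containing the rows. The goal is to show that all $R_a$ lie on a ``star'' of roots through one common index $k$.

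\textbf{Step 1 (rows are distinct, and the count of $n$).} Since $\lambda_{a,b}>0$ and $e_i\neq e_j$, we have $R_a\neq R_b$ for $a\neq b$, so the $n$ rows are pairwise distinct. Since $\operatorname{rank}A=m$, we get $n\ge m$. At the end we will show that all rows lie in a set of at most $m$ vectors. Distinctness then gives $n\le m$, hence $m=n$.

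\textbf{Step 2 (the triangle lemma).} For three distinct rows $a,b,c$ we have
\[
\lambda_{a,b}(e_i-e_j)+\lambda_{b,c}(e_k-e_l)=\lambda_{a,c}(e_p-e_q).
\]
Comparing supports gives a short case analysis. The right-hand side has support of size $2$. If $\{i,j\}\cap\{k,l\}=\emptyset$, the left side has support of size $4$, which is impossible. If the two pairs are equal as sets, then $R_a,R_b,R_c$ lie on one line in a root direction. Otherwise the pairs share exactly one index, and the shared coordinate must cancel. This forces the roots to chain, as in $(e_i-e_j)+(e_j-e_l)$, and forces $\lambda_{a,b}=\lambda_{b,c}$. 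I expect this support-and-sign bookkeeping, together with ruling out the collinear case, to be the main obstacle.

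\textbf{Step 3 (the star configuration).} Fix $R_1$ and write $R_a-R_1=\lambda_a(e_{i_a}-e_{j_a})$ with $\lambda_a$ now allowed to be of either sign. The differences $R_a-R_1$ lie in the hyperplane $\sum x_i=0$ and, together with $R_1$, span $\mathbb R^m$. Hence they span that $(m-1)$-dimensional hyperplane, so the graph on $\{1,\dots,m\}$ with edges $\{i_a,j_a\}$ is connected and spanning. In particular it has at least $m-1\ge 2$ distinct edges.

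By Step 2, any two distinct edges share a vertex, and the corresponding coefficients agree in absolute value with a compatible orientation. A family of pairwise intersecting edges is either a star or a triangle. A triangle can only occur when $m=3$, and the three points $R_1,\ R_1+\lambda(e_1-e_2),\ R_1+\lambda(e_1-e_3)$ then already form a star centred at a suitable row. So I reduce to a star centred at an index $k$, after possibly changing the base point $R$.

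If two rows used the same edge with different coefficients, I compare both with a row on a different edge, which exists by connectivity. The equal-magnitude and orientation constraints then force the two coefficients to coincide, contradicting distinctness. This yields a single $\lambda\neq 0$, a base point $R$ (the row sitting at the centre, or a recentred one), and $k$ with
\[
\{R_1,\ldots,R_n\}\subseteq\{R+\lambda(e_k-e_t):1\le t\le m\}.
\]
The right-hand set has at most $m$ elements, which completes Step 1 and gives $m=n$.
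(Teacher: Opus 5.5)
Most of your argument works. The support count in Step 2 is right, and so is the connectivity of the edge graph deduced from $\operatorname{rank}A=m$. Your comparison with a row on a different edge is also correct: it shows each edge carries at most one row, which disposes of collinear (antipodal) configurations $R_1\pm\lambda(e_u-e_v)$. The paper's Claim~2 excludes that case only tacitly.

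The gap is the triangle case. Suppose the edges $\{i_a,j_a\}$ coming from the differences $R_a-R_1$ form the triangle $\{1,2\},\{1,3\},\{2,3\}$, so $m=3$. Then there are at least three rows other than $R_1$, hence $n\ge 4>m$. No change of base point can put four distinct rows into a set $\{R+\lambda(e_k-e_t)\}$ with at most three elements. The three points you display only produce the edges $\{1,2\}$ and $\{1,3\}$ from $R_1$, i.e.\ a star, so that sentence does not address the case at all. As written, your reduction to a star is unjustified here.

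What is needed is that this case cannot occur, and your own bookkeeping gives it. Write $R_a-R_1=\alpha(e_1-e_2)$, $R_b-R_1=\beta(e_1-e_3)$, $R_c-R_1=\gamma(e_2-e_3)$ with $\alpha,\beta,\gamma\neq 0$. Each of the following must have support of size $2$:
\begin{itemize}
\item[] $R_a-R_b=(\alpha-\beta)e_1-\alpha e_2+\beta e_3$, forcing $\alpha=\beta$;
\item[] $R_a-R_c=\alpha e_1-(\alpha+\gamma)e_2+\gamma e_3$, forcing $\gamma=-\alpha$;
\item[] $R_b-R_c=\beta e_1-\gamma e_2+(\gamma-\beta)e_3$, forcing $\gamma=\beta$.
\end{itemize}
Together these give $\alpha=-\alpha$, so $\alpha=0$, a contradiction. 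Hence the edge family is a star at some $k$. Writing every difference as a multiple of $e_k-e_t$, cancellation at $e_k$ makes all the coefficients equal. You then get the conclusion with $R=R_1$ and no recentring.

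For comparison, the paper never meets this case because it works with oriented roots and positive coefficients. Claim~1 shows all $\lambda_{a,1}$ are equal. Claim~2 shows the differences $R_a-R_1$ share a common head or a common tail. Any index shared ``crosswise'' (head of one root, tail of another) produces support of size $3$, which automatically excludes odd cycles such as your triangle.
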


\begin{proof}
    Consider the complete directed graph $G$ on the vertex set $\{R_1,\ldots,R_n\}\subset \mathbb R^m$, where each directed edge joining $R_b$ and $R_a$ $(b\neq a)$ is identified with the vector $R_a-R_b$. 

    We need the following observation.\\
    \textbf{Claim 1:} The scalars corresponding to the edges incident to $R_1$ are all equal; that is, $\lambda_{a,1}=\lambda_{b,1}$ for every $a,b\neq 1$.
    
    First we show that $\lambda_{a,1}=\lambda_{b,1}$ for any two distinct edges $R_a-R_1, R_b-R_1$ which are not scalar multiple of each other. By hypothesis, $R_a-R_1=\lambda_{a,1}(e_u-e_v)$ and $R_b-R_1=\lambda_{b,1}(e_p-e_q)$ for some $u\neq v, p\neq q$ and $\lambda_{a,1},\lambda_{b,1}>0.$ Then 
    \begin{equation}\label{mu(ei-ej)}
        R_a-R_b=\lambda_{a,1}(e_u-e_v)-\lambda_{b,1}(e_p-e_q)=\mu (e_i-e_j), \text{ for some $\mu>0$ and $i\neq j.$}
    \end{equation}
    Thus $\{u,v\}\cap \{p,q\}\neq \emptyset.$ Since $R_a-R_1, R_b-R_1$ are not scalar multiple of each other, we have $\{u,v\}\neq \{p,q\}.$ Now the possible cases are
    \[
    (i)\; u=p,v\neq q,\quad (ii)\;u=q,v\neq p,\quad (iii)\;v=p, u\neq q,\quad \text{and }(iv)\; v=q, u\neq p.
    \]
    The cases $(i)$ and $(iv)$ implies that $\lambda_{a,1}=\lambda_{b,1}$ and $(ii)$ and $(iii)$ are impossible, using \eqref{mu(ei-ej)}.

    Since $\mathrm{rank} (A)=m\ge 3,$ there exist at least two edges, say $R_g-R_1$ and $R_h-R_1$, which are not scalar multiples of each other. In fact, if all the edges are scalar multiples of each other,  there exist a fixed pair $i\ne j$ such that for each $a\neq 1$,
    \[
    R_a-R_1=\lambda_{a,1}(e_i-e_j) \quad \implies\quad  R_a=R_1+\lambda_{a,1}(e_i-e_j),\quad \text{ for some }\lambda_{a,1}\in \mathbb R.
    \]
    This shows that every row of $A$ belongs to the subspace $\mathrm{span}\{R_1,e_i-e_j\}\subseteq \mathbb R^m$, contradicting to the fact that $\mathrm{rank}(A)=m\ge 3.$

    Now we consider an edge $R_c-R_1$ which is a scalar multiple of $R_d-R_1$. Observe that $R_d-R_1$ cannot be a scalar multiple of both $R_g-R_1$ and $R_h-R_1$; without loss of generality, assume that it is not a scalar multiple of $R_g-R_1$. By the preceding argument, it follows that $\lambda_{d,1}=\lambda_{g,1}$. Since $R_c-R_1$ is a scalar multiple of $R_d-R_1$, it is also not a scalar multiple of $R_g-R_1$. Applying the same argument once again, we obtain $\lambda_{c,1}=\lambda_{g,1}$. Hence, $\lambda_{c,1}=\lambda_{d,1}$. This proves Claim 1.

    We set, using Claim 1, that $R_a-R_1=\lambda(e_{i_a}-e_{j_a}), \;\lambda>0$ for each $a\neq 1$.

    In order to complete the proof of the lemma, we need to show one more claim. \\
    \textbf{Claim 2:} Either $i_a=i_b$, or $j_a=j_b$, for all $a, b\neq 1.$

    We already have seen that $\{i_a,j_a\}\cap \{i_b,j_b\}\neq \emptyset$ for $a,b \neq 1$, otherwise $R_a-R_b$ will have at least four nonzero coordinates and cannot be of the form $\lambda_{a,b}(e_i-e_j)$.

    If possible let there be three \textit{distinct} edges $R_a-R_1, R_b-R_1, R_c-R_1\neq 0$ in $\mathbb R^m$ be such that $i_a=i_b$ and $j_a=j_c.$ The distinctness of the edges implies that $j_a\neq j_b$ and $i_a\neq i_c.$ Then $R_b-R_a=\lambda(e_{j_a}-e_{j_b})\neq 0$ and $R_c-R_a=\lambda(e_{i_c}-e_{i_a})\neq 0.$ Now the edge $$R_c-R_b=\lambda(e_{i_c}-e_{i_a}-e_{j_a}+e_{j_b}),$$
    will be of the form $\lambda_{c,b}(e_i-e_j)$ only if one of the following holds:
    \[
    (i)\; i_c=i_a,\quad (ii)\; j_a=j_b,\quad(iii)\; i_c=j_a,\quad \text{and }(iv)\; i_a=j_b.
    \]
    The cases $(i)$ and $(ii)$ contradicts to the facts $R_c-R_a\neq 0$ and $R_b-R_a\neq 0$ respectively. Also the cases $(iii)$ and $(iv)$ are not possible as $i_a=i_b, j_a=j_c$ and $R_b-R_1, R_c-R_1\neq 0.$ Thus, in each case, we have contradictions. Therefore Claim 2 holds.

    Without loss of generality, assume that $i_a=i_k$ for all $a\neq 1.$
    Now combining Claim 1 and Claim 2, we have 
    \begin{equation}\label{n many rows}
        \{R_1,\ldots,R_n\}\subseteq
    \{\,R_1+\lambda(e_k-e_t):1\le t\le m\,\}.
    \end{equation}
    If $n>m,$ the observation \eqref{n many rows} ensures that there exist distinct $a,b\in \{1,\ldots ,n\}$ such that $R_a=R_b.$ Then the edge $R_a-R_b$ does not satisfy the hypothesis of the statement.
    Thus it follows that $n=m$ and this completes the proof.
\end{proof}

\begin{lemma}\label{T(e_i)}
    Let $T:\mathbb R^m \to \mathbb R^n$ be an injective linear map with $ m\ge 3$. Define the hypersurfaces $H_{p,q}:=\{(x_1,\dots,x_m)\in\mathbb R^m : x_p=x_q\},$ for $1\le p<q\le m$,
    and  
    $K_{u,v}:=\{(y_1,\dots,y_n)\in\mathbb R^n : y_u=y_v\},$ for $1\le u<v\le n$.
    Suppose
    \[
    T\Bigl(\mathbb R^m\setminus \bigcup_{1\le p<q\le m} H_{p,q}\Bigr)
    \;\subseteq\;
    \mathbb R^n\setminus \bigcup_{1\le u<v\le n} K_{u,v}.
    \]
    Then $m=n$ and there exist a permutation $\sigma$ on the set
    $\{1,2,\ldots,m\}$, a nonzero scalar $\lambda$, and scalars
    $b_i,\; i=1,2,\ldots,m,$ such that
    \[
    T(e_i)=b_i\mathbf 1_n-\lambda e_{\sigma(i)},\quad \text{for all }i=1,2,\ldots,m,
    \]
    where $e_i$ denotes the $i$-th standard basis vector and $\mathbf 1_n=\sum_{i=1}^n e_i=(1,\ldots,1)$.
\end{lemma}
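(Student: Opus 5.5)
The plan is to reduce to Lemma~\ref{row - row is scaling and shift} by studying the rows of the matrix of $T$. Write $A$ for the $n\times m$ matrix of $T$, with rows $R_1,\ldots,R_n\in\mathbb R^m$. Then the $u$-th coordinate of $T(x)$ is $\langle R_u,x\rangle$. The hypothesis says that for every $u\neq v$ the linear functional $x\mapsto\langle R_u-R_v,x\rangle$ does not vanish on the complement $U:=\mathbb R^m\setminus\bigcup H_{p,q}$. Equivalently, the kernel hyperplane $(R_u-R_v)^\perp$, or all of $\mathbb R^m$ if $R_u=R_v$, is contained in $\bigcup_{p<q}H_{p,q}$. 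Since $T$ is injective, $A$ has rank $m\ge 3$, which is the rank hypothesis needed in Lemma~\ref{row - row is scaling and shift}.

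\textbf{Step 1.} We show that each difference $R_u-R_v$ is a nonzero multiple of some $e_i-e_j$. If $R_u=R_v$, the functional vanishes identically, which contradicts $U\neq\emptyset$. Otherwise $(R_u-R_v)^\perp$ is a hyperplane contained in a finite union of hyperplanes $H_{p,q}$. A real vector space is not a finite union of proper subspaces, so $(R_u-R_v)^\perp=H_{p,q}$ for some $p<q$. Hence $R_u-R_v=c(e_p-e_q)$ with $c\neq0$. Reordering $p,q$ if necessary makes the scalar positive, giving $R_u-R_v=\lambda_{u,v}(e_i-e_j)$ with $\lambda_{u,v}>0$.

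\textbf{Step 2.} We apply Lemma~\ref{row - row is scaling and shift} to $A$. It gives $m=n$ together with $R\in\mathbb R^m$, $\lambda\neq0$ and an index $k$ such that every row satisfies $R_u=R+\lambda(e_k-e_{t_u})$. The rows are pairwise distinct by Step~1, so $u\mapsto t_u$ is injective. Since $n=m$, this map is a bijection of $\{1,\ldots,m\}$, and we call it $\tau$.

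\textbf{Step 3.} We read off the columns. The $i$-th column $T(e_i)$ has $u$-th entry $R^{(i)}+\lambda\delta_{ik}-\lambda\delta_{i,\tau(u)}$. This equals $b_i-\lambda\delta_{u,\tau^{-1}(i)}$, where $b_i:=R^{(i)}+\lambda\delta_{ik}$. Therefore $T(e_i)=b_i\mathbf 1_n-\lambda e_{\sigma(i)}$ with $\sigma=\tau^{-1}$, which is the claimed form.

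The only step with real content is Step~1, the passage from the non-vanishing condition to "each row difference is a multiple of a root $e_i-e_j$". It rests on the fact that a hyperplane contained in a finite union of hyperplanes must equal one of them, which holds because a real vector space is not a finite union of proper subspaces. Everything else is bookkeeping once Lemma~\ref{row - row is scaling and shift} is available.
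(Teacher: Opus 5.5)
Your proposal is correct and follows the paper's proof essentially step for step. The paper likewise shows that $T^{-1}(K_{u,v})$ is a hyperplane equal to some $H_{p,q}$, deduces that each row difference is a nonzero multiple of $e_p-e_q$, applies Lemma~\ref{row - row is scaling and shift}, and reads off the columns. Two small differences are harmless and slightly cleaner: you normalize the sign of $\lambda_{u,v}$, which that lemma requires to be positive but the paper leaves unchecked, and you get the permutation from distinctness of the rows rather than from the rank argument.
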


\begin{proof}
    Let $A:=\bigcup_{p<q} H_{p,q}$ and $B:=\bigcup_{u<v} K_{u,v}$. The fact $T(\mathbb R^m \setminus A)\subseteq \mathbb R^n \setminus B$ implies 
    \[
    T^{-1}\Bigl(\bigcup_{1\le u<v\le n} K_{u,v}\Bigr)
    \subseteq
    \bigcup_{1\le p<q\le m} H_{ p,q}.
    \]
    
    Fix $u<v$. Since $K_{u,v}$ is a hyperplane, $T^{-1}(K_{u,v})$ is
    \begin{enumerate}
        \item either a hyperplane, if $T(\mathbb R^m)\nsubseteq K_{u,v}$,
        \item or  $\mathbb R^m$, if $T(\mathbb R^m)\subseteq K_{u,v}.$
    \end{enumerate}
      The possibility (2) is impossible, for otherwise $T(\mathbb R^m)\subseteq K_{u,v}\subseteq B$, contradicting the assumption $T(\mathbb R^m\setminus A)\subseteq \mathbb R^n\setminus B$ where $\mathbb R^m\setminus A\neq\emptyset$. Hence $T^{-1}(K_{u,v})$ is a hyperplane.
    
    Since $T^{-1}(K_{u,v})\subseteq T^{-1}(B)\subseteq A$, we have
    \[
    T^{-1}(K_{u,v})
    \subseteq
    \bigcup_{1\le p<q\le m} H_{p,q}.
    \]
    A hyperplane cannot be contained in a finite union of distinct hyperplanes unless it is contained in one of them. Therefore $T^{-1}(K_{u,v})\subseteq H_{p,q}$ for some pair $p<q$. Since $T^{-1}(K_{u,v}), H_{p,q}$ both are hyperplanes of $\mathbb R^m$, it follows that, for any $K_{u,v}$,
    \begin{equation}\label{T-1(Kuv)=Hpq}
        T^{-1}(K_{u,v})=H_{p,q}, \quad \text {for some } p<q.
    \end{equation}
    Define the linear functionals $\alpha_{p,q}: \mathbb R^m \to \mathbb R$ and $\beta_{u,v}: \mathbb R^n\to \mathbb R$ by
    \[
    \alpha_{p,q}(x_1,\ldots,x_m)=x_p-x_q\quad \text{and }\quad \beta_{u,v}(y_1,\ldots,y_n)=y_u-y_v.
    \]
    Now note that $H_{p,q}=\ker \alpha_{p,q}$ and $T^{-1}(K_{u,v})=\ker (\beta_{u,v}\circ T).$ Thus, using \eqref{T-1(Kuv)=Hpq}, we have $\ker \alpha_{p,q}=\ker (\beta_{u,v}\circ T).$ Since the linear functionals having the same kernal are scalar multiples of each other, it follows that there exists a \textit{nonzero} $\lambda_{u,v}$ such that
    \begin{equation}\label{beta T = c alpha}
        \beta_{u,v}\big(T(x)\big)= \lambda_{u,v}\alpha_{p,q}(x)\quad \text{for all }x\in \mathbb R^m.
    \end{equation}
    This shows that the $p$-th and $q$-th coordinates of $x$ are equal if and only if the $u$-th and $v$-th coordinates of $T(x)$ are also equal.
    
    Let $\{e_i\}_{i=1}^m$ denote the standard basis of $\mathbb R^m$. Now using \eqref{beta T = c alpha}, it follows that
    \[
    \beta_{u,v}\big(T(e_i)\big)=\begin{cases}
        \lambda_{u,v},&\text{if }i=p,\\
        -\lambda_{u,v},& \text{if }i=q,\\
        0, &\text{if }i\in \{1,2,\ldots, m\}\setminus\{p,q\}.
    \end{cases}
    \]
    
    Let $T$ correspond to the matrix $[T]:=[a_{ij}]=[R_1, R_2, \cdots , R_n]^T$ with respect to the standard matrix in both domain and codomain, where $R_i$ denotes the $i$-th row of $[T].$ Now observe that, for $x=(x_1, \ldots, x_m)\in \mathbb R^m$, we have $$\beta_{u,v}\big(T(x)\big)=\sum (a_{u,j}-a_{v,j})x_j= \lambda_{u,v}(x_p-x_q).$$
     This shows that for any $u< v$, there exist $p< q$ such that 
     \begin{equation}\label{row u-row v}
          R_u-R_v=\lambda_{u,v}(e_p-e_q), \quad \text{where }\lambda_{u,v}\in \mathbb R\setminus \{0\}.
     \end{equation}

    By Lemma~\ref{row - row is scaling and shift}, $m=n$ and there exist
    $R\in\mathbb R^m$, $\lambda\neq 0$, and
    $k\in\{1,\ldots,m\}$ such that
    \[
    \{R_1,\ldots,R_n\}\subseteq
    \{\,R+\lambda(e_k-e_t):1\le t\le m\,\}.
    \]
    Since $T$ is injective, $\operatorname{rank}([T])=m$. Hence all vectors
    $R+\lambda(e_k-e_t)$, $t=1,\ldots,m$, occur among the rows of $[T]$
    exactly once. Therefore, there exists a permutation $\sigma$ of the set
    $\{1,\ldots,m\}$ such that
    \[
    R_{\sigma(t)}=R+\lambda(e_k-e_t),\qquad t=1,\ldots,m.
    \]
    
    Let $C_j=T(e_j)$ denote the $j$-th column of $[T]$. Writing
    $R=(r_1,\ldots,r_m)$, we obtain
    \[
    (C_j)_{\sigma(t)}
    =(R_{\sigma(t)})_j
    =
    r_j+\lambda\delta_{k,j}-\lambda\delta_{t,j},
    \qquad t=1,\ldots,m.
    \]
    Therefore $C_j=(r_j+\lambda\delta_{k,j})\mathbf 1_m-\lambda e_{\sigma(j)},$
    where $\mathbf 1_m=(1,\ldots,1)^T$.
    Setting $b_j=r_j+\lambda\delta_{c,j},$ 
    we obtain $T(e_j)=b_j\mathbf1_n-\lambda e_{\sigma(j)},$ for all $j=1,\ldots,m.$ 
    This completes the proof.
\end{proof}

\begin{lemma}\label{extension}
    Let $\mathbf 1_k=(1,\ldots,1)=\sum_{i=1}^k e_i\in\mathbb R^k$, and let
    \[
    T:\mathbb R^m/\mathbb R\mathbf 1_m\to
    \mathbb R^n/\mathbb R\mathbf 1_n
    \]
    be an injective linear map. Then there exists an injective linear map $\widetilde T:\mathbb R^m\to\mathbb R^n$ such that
    $\widetilde T(\mathbb R\mathbf 1_m)
    \subseteq \mathbb R\mathbf 1_n$ and the induced map on the
    quotients is $T$.
    \end{lemma}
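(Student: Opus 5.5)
The plan is to construct $\widetilde T$ explicitly by choosing compatible complements and then checking injectivity by hand. Let $\pi_m\colon\mathbb R^m\to\mathbb R^m/\mathbb R\mathbf 1_m$ and $\pi_n\colon\mathbb R^n\to\mathbb R^n/\mathbb R\mathbf 1_n$ be the quotient maps.

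First I lift $T$ to a linear map $L\colon\mathbb R^m\to\mathbb R^n$ with $\pi_n\circ L=T\circ\pi_m$ and $L(\mathbf 1_m)=0$. Concretely, let $W=\{x\in\mathbb R^m:\sum x_i=0\}$. This is a complement to $\mathbb R\mathbf 1_m$, so $\pi_m|_W$ is an isomorphism. Choose any linear section $s\colon\mathbb R^n/\mathbb R\mathbf 1_n\to\mathbb R^n$ of $\pi_n$; for instance, send a class to its unique representative with coordinate sum zero. Then set $L=s\circ T\circ\pi_m$. This map kills $\mathbf 1_m$ and satisfies $\pi_n L=T\pi_m$.

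Next I define $\widetilde T=L+\varphi(\cdot)\,\mathbf 1_n$, where $\varphi(x)=\frac1m\sum_i x_i$ is the linear functional that is $1$ on $\mathbf 1_m$ and vanishes on $W$. Adding a multiple of $\mathbf 1_n$ does not change the induced map, so $\pi_n\widetilde T=T\pi_m$. Also $\widetilde T(\mathbf 1_m)=\mathbf 1_n$, which gives $\widetilde T(\mathbb R\mathbf 1_m)\subseteq\mathbb R\mathbf 1_n$.

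It remains to check injectivity. Suppose $\widetilde T(x)=0$. Then $T(\pi_m x)=\pi_n\widetilde T(x)=0$, and since $T$ is injective, $\pi_m x=0$, so $x=c\mathbf 1_m$. But then $\widetilde T(x)=c\mathbf 1_n$, which forces $c=0$.

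The only subtle point is making sure $L$ takes values in the sum-zero complement of $\mathbb R\mathbf 1_n$. This is what lets the scalar $c$ be read off cleanly in the injectivity check, and it is exactly why $s$ is chosen as the sum-zero section. The argument works over $\mathbb R$, and equally over $\mathbb Q$ if needed later.
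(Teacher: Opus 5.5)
Your proof is correct and is essentially the paper's construction. The paper chooses complements $V\subset\mathbb R^m$ and $W\subset\mathbb R^n$ of the lines $\mathbb R\mathbf 1$, lifts $T$ to $L=(p_n|_W)^{-1}\circ T\circ p_m|_V$, and sets $\widetilde T(\alpha\mathbf 1_m+v)=\alpha\mathbf 1_n+L(v)$; this is exactly your $\widetilde T=s\circ T\circ\pi_m+\varphi(\cdot)\mathbf 1_n$ with both complements taken to be sum-zero.

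One small correction to your closing remark. Your injectivity check never uses that $s$ lands in the sum-zero complement: it reduces to $x=c\mathbf 1_m$ via injectivity of $T$ and then uses $\widetilde T(\mathbf 1_m)=\mathbf 1_n$, so any linear section of $\pi_n$ works. That property is needed only in the paper's version of the check, which reads off $\alpha=0$ from the decomposition $\mathbb R^n=\mathbb R\mathbf 1_n\oplus W$.
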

    
    \begin{proof}
    Let $L_m=\mathbb R\mathbf 1_m$ and
    $L_n=\mathbb R\mathbf 1_n$. Choose subspaces
    $V\subset \mathbb R^m$ and $W\subset \mathbb R^n$ such that
    $\mathbb R^m=L_m\oplus V$ and $\mathbb R^n=L_n\oplus W$.
    
    Let $p_m:\mathbb R^m\to \mathbb R^m/L_m$ and
    $p_n:\mathbb R^n\to \mathbb R^n/L_n$ denote the quotient maps.
    Since the restriction maps $p_m|_V$ and $p_n|_W$ are isomorphisms, the map
    \[
    L:=(p_n|_W)^{-1}\circ T\circ (p_m|_V):V\to W
    \]
    is well-defined. Moreover, $L$ is injective because $T$ is injective.
    
    Define $\widetilde T:\mathbb R^m\to\mathbb R^n$ by $\widetilde T(\alpha \mathbf 1_m+v)
    =
    \alpha \mathbf 1_n+L(v),
    \;
    \alpha\in\mathbb R,\ v\in V.$
    Then $\widetilde T$ is linear and
    $\widetilde T(L_m)\subseteq L_n$. Consequently, for $\alpha\in\mathbb R$ and $v\in V$,
    \[
    p_n(\widetilde T(\alpha \mathbf 1_m+v))
    =
    p_n(L(v))
    =
    T(p_m(v))
    =
    T(p_m(\alpha \mathbf 1_m+v)).
    \]
    Hence the map induced by $\widetilde T$ on the quotient spaces is
    precisely $T$.
    
    It remains to show that $\widetilde T$ is injective. Suppose
    $\widetilde T(\alpha \mathbf 1_m+v)=0$. Then $\alpha \mathbf 1_n+L(v)=0.$
    Since $\mathbb R^n=L_n\oplus W$, it follows that
    $\alpha \mathbf 1_n=0$ and $L(v)=0$. Thus $\alpha=0$ and, since $L$ is
    injective, $v=0$. Therefore
    $\alpha \mathbf 1_m+v=0$, proving that $\widetilde T$ is injective.
\end{proof}

\begin{remark}\label{over Q}
    Replacing $\mathbb{R}$ by $\mathbb{Q}$ throughout the statements and proofs of Lemmas~\ref{row - row is scaling and shift}, \ref{T(e_i)}, and \ref{extension} yields their corresponding versions over $\mathbb{Q}$. We shall use these versions in the following theorem.
\end{remark}

Now we are ready to prove our main result. 
We build on the idea of using the braid hyperplane arrangement in the study of cohomology homomorphisms from \cite{HH84}.

\begin{theorem}\label{main thm}
    Let $\mathbb{F}\in\{\mathbb{C},\mathbb{H}\}$, and let
    $f\colon \mathbb{F}G(\nu)\to \mathbb{F}G(\mu)$
    be a continuous map between $\mathbb{F}$-partial flag manifolds of the same dimension, where
    $\nu=(n_1,\ldots,n_r)$, $\mu=(m_1,\ldots,m_s)$, and $s\ge 3$. If $\nu$ is not a permutation of $\mu$, then $\deg(f)=0$.
\end{theorem}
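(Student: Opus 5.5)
Assume $\deg(f)\neq 0$. By Lemma~\ref{degree-lemma}, $f^*\colon H^*(\mathbb{F}G(\mu);\mathbb{Q})\to H^*(\mathbb{F}G(\nu);\mathbb{Q})$ is then injective, and we aim for a contradiction. We treat $\mathbb{F}=\mathbb{C}$ first. The quaternionic case follows verbatim: the cohomology is the degree-doubled version of the complex one, so we work in degree $4$ with the classes $q_{1,j}$ in place of $x_j$. The heights of these classes and the injectivity arguments are purely algebraic, so they carry over unchanged.

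\emph{Step 1: degree two.} The relation $\sum_j x_j=0$ identifies $H^2(\mathbb{C}G(\mu);\mathbb{Q})$ with $\mathbb{Q}^s/\mathbb{Q}\mathbf 1_s$, and similarly $H^2(\mathbb{C}G(\nu);\mathbb{Q})\cong\mathbb{Q}^r/\mathbb{Q}\mathbf 1_r$. Restricting $f^*$ gives an injective linear map $T$ between these quotients, which Lemma~\ref{extension} (over $\mathbb{Q}$, Remark~\ref{over Q}) lifts to an injective $\widetilde T\colon\mathbb{Q}^s\to\mathbb{Q}^r$ with $\widetilde T(\mathbf 1_s)\subseteq\mathbb{Q}\mathbf 1_r$.

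Next we show that $\widetilde T$ maps the complement of the braid arrangement into the complement of the braid arrangement. Take $y=\sum a_ix_i$ with all $a_i$ distinct. By Theorem~\ref{heights of deg 2 elements}, $y$ has height $h(y)=\sum_{i<j}m_im_j=\dim_{\mathbb{C}}\mathbb{C}G(\mu)$, so $y^{\dim}\neq0$. Injectivity of $f^*$ gives $f^*(y)^{\dim}\neq0$, and since the two dimensions agree, $f^*(y)$ has maximal height $\sum_{i<j}n_in_j$. By Theorem~\ref{heights of deg 2 elements}, merging any two groups of coefficients strictly lowers the height, so maximal height forces the coefficients of $f^*(y)$ to be all distinct. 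This is precisely the hypothesis of Lemma~\ref{T(e_i)}. Since $s\ge3$, that lemma gives $r=s$ and $\widetilde T(e_i)=b_i\mathbf 1-\lambda e_{\sigma(i)}$. Hence modulo $\mathbf 1$ we have $f^*(x_i)=-\lambda x_{\sigma(i)}$, with $\lambda\in\mathbb{Q}^\times$.

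\emph{Step 2: comparing heights.} For $i\neq j$, take $y=x_i-x_j$; its coefficients take three distinct values, $1$, $-1$ and $0$, the last on all remaining indices. By Theorem~\ref{heights of deg 2 elements},
\[
h(y)=m_im_j+(m_i+m_j)(n-m_i-m_j).
\]
Its image is $-\lambda(x_{\sigma(i)}-x_{\sigma(j)})$, which has the analogous height with $n_{\sigma(i)},n_{\sigma(j)}$ in place of $m_i,m_j$. Injectivity gives $h(f^*y)\ge h(y)$. Equality also holds: the argument applies to every element, and $f^*$ in degree $2$ is an isomorphism because $r=s$, so the inverse map yields the reverse inequality. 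Equality of heights alone does not obviously pin down the individual $m_i$, so we use a cleaner invariant. The classes $x_i$ themselves have height
\[
h(x_i)=m_i(n-m_i),
\]
and the total $n$ is the same on both sides, since the dimensions and $\sum m_i$ are constrained. We therefore get $m_i(n-m_i)=n_{\sigma(i)}(n'-n_{\sigma(i)})$.

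To extract the pairwise data we use $x_i+x_j$ and similar combinations, aiming for $m_im_j=n_{\sigma(i)}n_{\sigma(j)}$ for all pairs. For instance, the height of $x_i-x_j$ minus the corresponding expressions recovers the pairwise products once the totals agree. Lemma~\ref{ninj=mimj} with $r\ge3$ then gives $m_i=n_{\sigma(i)}$, so $\nu$ is a permutation of $\mu$, a contradiction.

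\emph{Main obstacle.} The delicate step is establishing $\sum n_i=\sum m_i$ and the exact pairwise identities, rather than just inequalities. My first attempt is to consider the height of a generic element minus one merged pair, $\sum_{k\ne j}a_kx_k+a_ix_j$, which has height $\dim-m_im_j$. Height preservation, obtained from injectivity in both directions, then yields directly
\[
\dim-m_im_j=\dim-n_{\sigma(i)}n_{\sigma(j)},
\]
which avoids needing the total $n$ at all.
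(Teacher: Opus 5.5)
Your final argument is essentially the paper's proof: you lift $f^*|_{H^2}$ via Lemma~\ref{extension}, use maximality of heights to verify the braid-complement hypothesis of Lemma~\ref{T(e_i)} (giving $r=s$ and $f^*$ acting on $H^2$ as a scaled permutation $\sigma$), compare heights of classes with exactly one merged pair to get $m_im_j=n_{\sigma(i)}n_{\sigma(j)}$, and conclude with Lemma~\ref{ninj=mimj}. You should drop two detours: the claim $\sum m_i=\sum n_i$ is unjustified (e.g.\ $(1,1,5)$ and $(1,2,3)$ both have dimension $11$ but different totals), and the ``inverse map'' justification of height equality is invalid because $(f^*|_{H^2})^{-1}$ is not a ring map --- equality is automatic anyway, since any ring homomorphism gives $h(f^*y)\le h(y)$ and injectivity gives $h(f^*y)\ge h(y)$.
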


\begin{proof}
    The proof is based on the fact that a map of nonzero degree induces a monomorphism in rational cohomology. Thus, if $\deg(f)\neq 0$, then the induced homomorphism
    $f^*:H^*(\mathbb FG(\mu);\mathbb Q)\to H^*(\mathbb FG(\nu);\mathbb Q)$
    is injective.

    First we prove for $\mathbb F=\mathbb C.$

    Let $\mathcal R$ and $\mathcal S$ denote the rational cohomology algebras of $\mathbb CG(\nu)$ and $\mathbb CG(\mu)$, respectively. Thus $\mathcal R=\bigoplus_{i=0}^d \mathcal R^i$ and $\mathcal S=\bigoplus_{i=0}^d \mathcal S^i$, where $\mathcal R^i=H^i(\mathbb CG(\nu);\mathbb Q), \;\mathcal S^i=H^i(\mathbb CG(\mu);\mathbb Q)$, and $d=\dim \mathbb CG(\nu)=\dim \mathbb CG(\mu)$.

      Let $x_i=c_1(\gamma_i)$ and $y_j=c_1(\eta_j)$, where $\gamma_i$ and $\eta_j$ denote the canonical complex vector bundles of ranks $n_i$ and $m_j$ over $\mathbb CG(\nu)$ and $\mathbb CG(\mu)$, respectively. Then
    \[
    \mathcal R^2 \;\cong\; \sum_{i=1}^r \mathbb Qx_i\Big/\mathbb Q\sum _{i=1}^rx_i
    \quad\text{and}\quad
    \mathcal S^2 \;\cong\;  \sum_{i=1}^s \mathbb Qy_i\Big/\mathbb Q\sum _{i=1}^sy_i.
    \] 

    Since $\dim \mathcal R^2=r-1$ and $\dim \mathcal S^2=s-1$, the map
    $f^*:\mathcal S^2\to \mathcal R^2$
    cannot be injective when $r<s$. Therefore, a nonzero degree map may exist only if $r\ge s$, and it suffices to consider this case.

    Assume that $f$ has nonzero degree. Then the induced map $f^*:\mathcal S\to \mathcal R$ is a monomorphism of graded $\mathbb Q$-algebras.

    For $p<q$, let us fix notation for the subspaces as:
    \[
    \mathcal R^2_{p,q}:=\Big\{\sum _{i=1}^ra_ix_i\in \mathcal R^2\mid a_p=a_q\Big\}\text{ and } \mathcal S^2_{p,q}:=\Big\{\sum_{i=1}^sb_iy_i\in \mathcal S^2\mid b_p=b_q\Big\}. 
    \]
    In view of Theorem~\ref{Kahler class description}, one deduce that the set of all elements in $\mathcal R^2, \mathcal S^2$ of maximal height $d$ are given by
    \[
    A:=\mathcal R^2\setminus \bigcup_{1\le p<q\le r}\mathcal R^2_{p,q}\quad\text{and}\quad B:=\mathcal S^2\setminus \bigcup_{1\le p<q\le s}\mathcal S^2_{p,q}, \quad \text{respectively}.
    \]
    Since a monomorphism preserves the height of the elements,  $f^*( B )\subseteq  A$. 
    Now by applying Lemma~\ref{T(e_i)} and Lemma~\ref{extension}, in view of Remark~\ref{over Q}, we have $r=s$ and there exist a permutation $\sigma$ of $\{1,2,\ldots, r\}$ and $\lambda\in \mathbb Q\setminus \{0\}$ such that
    \begin{equation}\label{f* permutes}
        f^*(y_i)=\lambda x_{\sigma(i)}, \qquad \text{for all } i=1,2,\ldots,r.
    \end{equation}

    Theorem~\ref{heights of deg 2 elements} implies that the elements belonging exclusively to $\mathcal R^2_{i,j}$ among $\{\mathcal R^2_{p,q}\}$ have height
    $d-n_i n_j$, while those belonging exclusively to $\mathcal S^2_{i,j}$ among $\{\mathcal S^2_{p,q}\}$ have height $d-m_i m_j$. Because $f^*$ preserves heights, \eqref{f* permutes} yields $f^*(\mathcal S^2_{i,j})=\mathcal R^2_{\sigma(i),\sigma(j)}$ and
    \[
    d-m_i m_j=d-n_{\sigma(i)}n_{\sigma(j)} \implies m_i m_j=n_{\sigma(i)}n_{\sigma(j)}\text{ for }1\le i<j\le r.
    \] It follows from this and Lemma~\ref{ninj=mimj} that $m_i=n_{\sigma(i)}$ for all $i=1,2,\ldots,r$. Hence, the defining sequences $\mu$ and $\nu$ agree up to the permutation $\sigma$, which contradicts the assumption that the partial flag manifolds $\mathbb CG(\mu)$ and $\mathbb CG(\nu)$ are distinct. Therefore, we conclude that $\deg(f)=0$.

    For the case $\mathbb{F}=\mathbb{H}$, the argument is completely analogous. Indeed, the rational cohomology algebra of a quaternionic partial flag manifold is obtained from that of the corresponding complex partial flag manifold by doubling the degrees of the generators. Thus, the proof for the quaternionic case follows by the same reasoning.
\end{proof}

\begin{remark}
    Theorem~\ref{main thm} has an immediate algebraic-geometric consequence. Let $X$ and $Y$ be complex partial flag manifolds that are not Grassmannians. If there exists a finite surjective morphism $f:X\to Y$, then $f$ necessarily has nonzero degree. Therefore, Theorem~\ref{main thm} implies that $X$ and $Y$ are isomorphic as complex varieties. This is the analogue of Proposition~6 of \cite{PS89}, where the result is established for complex Grassmannians.
\end{remark}

Now we study the existence of nonzero-degree maps $f:\mathbb FG(\nu)\to \mathbb FG(\mu)$ when $\mu$ has length $2$ and $\nu$ has length at least $3$.

\begin{proposition}\label{prop r>2 s=2}
    Let $\mathbb{F}\in\{\mathbb{C},\mathbb{H}\}$, and let
    $f\colon \mathbb{F}G(\nu)\to \mathbb{F}G(\mu)$
    be a continuous map between $\mathbb{F}$-partial flag manifolds of the same dimension, where
    $\nu=(n_1,\ldots,n_r), r\ge 3$, and $\mu=(m_1, m_2)$. Then the Brouwer degree of $f$ is zero.  
\end{proposition}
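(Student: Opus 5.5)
The first step is to see what a nonzero degree map forces in degree two. Suppose $\deg(f)\neq 0$. By Lemma~\ref{degree-lemma}, $f^*\colon\mathcal S\to\mathcal R$ is then a monomorphism of graded $\mathbb Q$-algebras, where $\mathcal S=H^*(\mathbb CG(m_1,m_2);\mathbb Q)$ and $\mathcal R=H^*(\mathbb CG(\nu);\mathbb Q)$. I will first treat $\mathbb F=\mathbb C$ and transfer to $\mathbb H$ by degree doubling, as in Theorem~\ref{main thm}.

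Here $\mathcal S^2$ is one-dimensional, spanned by $y=c_1(\gamma_1)$. By Theorem~\ref{heights of deg 2 elements}, $y$ has height $m_1m_2=d$. Since monomorphisms preserve heights, $x:=f^*(y)=\sum_i a_ix_i$ has height $d$ in $\mathcal R$. Theorem~\ref{heights of deg 2 elements} then forces the $a_i$ to be pairwise distinct, so by Theorem~\ref{Kahler class description} $x$ is (a rational multiple of) a K\"ahler class. Unlike Theorem~\ref{main thm}, the braid-arrangement argument cannot be used here, because $\mathcal S^2$ is a line. The contradiction must therefore come from higher degrees.

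A caveat is needed at the outset. If $\min(m_1,m_2)=1$, the codomain is $\mathbb CP^d$. A classifying map $\mathbb CG(\nu)\to\mathbb CP^\infty$ for the line bundle with $c_1=x$ compresses into $\mathbb CP^d$ by cellular approximation, since $\dim_{\mathbb R}\mathbb CG(\nu)=2d$ and $(\mathbb CP^\infty,\mathbb CP^d)$ is $(2d+1)$-connected. Its degree is $\langle x^d,[\mathbb CG(\nu)]\rangle\neq 0$, as $x$ is K\"ahler. So the statement can only hold, and the proof must restrict to, $m_1,m_2\ge 2$, in line with the projective-space exception of \cite{PS89}. I would check this against the paper's conventions before proceeding.

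For $m_1,m_2\ge 2$, the plan is to pull back the full presentation \eqref{cohom c_i bar c_i}. Put $u_k=f^*(c_k)$ and $v_k=f^*(\bar c_k)$. These satisfy $(1+u_1+\dots+u_{m_1})(1+v_1+\dots+v_{m_2})=1$ in $\mathcal R$, with $u_1=x$. Moreover $u_2$ is linearly independent of $x^2$, since $c_2$ and $c_1^2$ are independent in $\mathcal S^4$ and $f^*$ is injective. I would then use the Lefschetz structure of $x$. Because $f^*$ is injective and commutes with multiplication by $y\mapsto x$, the Lefschetz primitive decomposition of $\mathcal S$ with respect to $y$ embeds into that of $\mathcal R$ with respect to $x$. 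Also $f^*$ scales the Poincaré pairing by $\deg f$.

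The degree-four primitive part of $\mathcal S$ is spanned by the class $P=c_2-\kappa c_1^2$ with $Py^{d-1}=0$. Its self-pairing $\langle P^2y^{d-4}\rangle$ has a fixed sign by Hodge--Riemann, and in $\mathcal R$ the Hodge--Riemann bilinear relations for $x$ impose the same sign condition. The aim is to show that no degree-four class $w\in\mathcal R^4$ can simultaneously be primitive for $x$ and satisfy the polynomial identity $(1+x+w+\cdots)(1+v_1+\cdots)=1$ truncated in the required degrees. I would test this by restricting along the fibration $\mathbb CG(\nu)\to\mathbb CG(n_1,n-n_1)$ and its fibres.

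I expect this last step to be the main obstacle. The height and K\"ahler data in degree two are exhausted once $f^*(y)$ is known to be K\"ahler, so genuinely new input about $\mathcal R^4$ is needed. If the Hodge--Riemann route stalls, the fallback is to compute, using the $x_i$-presentation, the kernel of multiplication by $x^{d-m_1-1}$ restricted to the subalgebra generated by $u_2$. The goal would be to show it must be nonzero, which contradicts injectivity of $f^*$.
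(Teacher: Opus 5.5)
Your observation that the statement fails when $\min(m_1,m_2)=1$ is correct. It exposes an error in the paper's proof, not in yours. The paper works only in degree two. From $f^*(\mathcal S^2\setminus\{0\})\subseteq\mathcal R^2\setminus\bigcup_{p<q}\mathcal R^2_{p,q}$ it ``takes complements'' to get \eqref{contains the hyper planes}, i.e.\ $\{0\}=f^*(\mathcal S^2_{1,2})\supseteq\bigcup_{p<q}\mathcal R^2_{p,q}$, and declares a contradiction. Passing to complements in this way requires $f^*\colon\mathcal S^2\to\mathcal R^2$ to be surjective, which is impossible since $\dim\mathcal S^2=1<r-1=\dim\mathcal R^2$. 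The only valid conclusion is the one you drew: $f^*(y)$ is a multiple of a K\"ahler class, which contradicts nothing.

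Your classifying-map argument gives a nonzero-degree map $\mathbb CG(\nu)\to\mathbb CP^d=\mathbb CG(1,d)$; it is the paper's Construction~\ref{nonzero deg map to Pn} with $c_1(\gamma_1)$ replaced by $\sum a_ix_i$, $a_i$ distinct integers. The quaternionic statement fails too. Apply Theorem~\ref{induced by topological map} to the ring map $\mathbb Q[u]/(u^{d+1})=H^*(\mathbb HP^d;\mathbb Q)\to H^*(\mathbb HG(\nu);\mathbb Q)$ sending $u$ to a degree-four class $x$ with $x^d\neq0$. Such an $x$ exists by degree doubling, and the map is well defined since $x^{d+1}=0$ for degree reasons. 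The realizing map $g$ has injective $g^*$, hence $\deg g\neq0$. So restricting to $m_1,m_2\ge2$ is forced, and for that case the paper gives no valid argument either.

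For $m_1,m_2\ge2$, however, your proposal is not a proof. The step you call the main obstacle is the entire content, and neither tool you propose can supply it, because both are transported by $f^*$ and hence automatically consistent.

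\emph{Hodge--Riemann.} If $P\in\mathcal S^4$ is $y$-primitive, then $f^*P$ is $x$-primitive. (The condition is $Py^{d-3}=0$; the condition $Py^{d-1}=0$ you wrote holds for degree reasons.) Moreover $\langle (f^*P)^2x^{d-4},[\mathbb CG(\nu)]\rangle$ and $\langle x^d,[\mathbb CG(\nu)]\rangle$ are both $\deg f$ times the corresponding numbers for $\mathbb CG(\mu)$. So their relative sign is exactly the one Hodge--Riemann requires in $\mathcal R$, and no contradiction can arise.

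\emph{Your fallback.} Since $f^*(a)x^k=f^*(ay^k)$, the kernel of multiplication by $x^k$ on $f^*(\mathbb Q[c_2])$ is just $f^*$ of the corresponding kernel in $\mathcal S$. It carries no new information.

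What must be shown is that $\mathcal R$ contains no solution of the Grassmannian relations. Since $\mathcal S^{2d}=\mathbb Q y^d$ and $\mathcal S$ is a Poincar\'e duality algebra, a graded algebra map $\mathcal S\to\mathcal R$ is injective iff $y^d\mapsto x^d\neq0$. By Theorem~\ref{induced by topological map}, every such map is realized, up to an automorphism of $\mathcal R$, by a map of nonzero degree. Hence the case $m_1,m_2\ge2$ is \emph{equivalent} to the nonexistence of $x=\sum a_ix_i$ with distinct $a_i$ and classes $u_k\in\mathcal R^{2k}$ ($2\le k\le m_1$) such that the degree-$2k$ parts of $(1+x+u_2+\cdots+u_{m_1})^{-1}$ vanish for $m_2<k\le m_1+m_2$.

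Already for $\mathbb CG(1,1,1,1)\to\mathbb CG(2,3)$, this means ruling out an $x$ with distinct coefficients together with an $x$-primitive $p\in\mathcal R^4$ satisfying $p^2=\lambda x^4+\mu px^2$, where $P^2=\lambda y^4+\mu Py^2$ in $\mathcal S$. That is an explicit polynomial computation in $H^8$ of the full flag manifold. It is the missing ingredient in your plan, and in the paper.
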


\begin{proof}
    We adopt the construction and notation introduced in the proof of Theorem~\ref{main thm}. 
    
    Suppose for the sake of contradiction that $\deg(f) \neq 0$. Since $f$ has non-zero degree, the induced homomorphism $f^*$ on the rational cohomology algebra is injective, and hence preserves the heights of cohomology classes. Consequently, $f^*$ restricts to an inclusion on the complements:
    \begin{equation*}
        f^*\big( \mathcal{S}^2 \setminus \bigcup_{1 \le p < q \le 2} \mathcal{S}^2_{p,q} \big) \subseteq \mathcal{R}^2 \setminus \bigcup_{1 \le p < q \le r} \mathcal{R}^2_{p,q}.
    \end{equation*}
    Taking the complements yields the reverse inclusion:
    \begin{equation}\label{contains the hyper planes}
        f^*\big( \bigcup_{1 \le p < q \le 2} \mathcal{S}^2_{p,q} \big) \supseteq \bigcup_{1 \le p < q \le r} \mathcal{R}^2_{p,q}.
    \end{equation}
    
    Observe that $\bigcup_{1 \le p < q \le 2} \mathcal{S}^2_{p,q}=\mathcal S^2_{1,2} = \{0\}$, the zero vector space, since $y_1+y_2=0$ in $\mathcal S^2$. On the other hand, since $r\ge 3$, the union $\bigcup_{1 \le p < q \le r} \mathcal{R}^2_{p,q} \neq \{0\}$ as it consists hyperplanes of dimension at least $1$. This contradicts inclusion~\eqref{contains the hyper planes}, as a linear transformation cannot map $\{0\}$ onto a non-zero space. 
    
    We conclude that $f$ must have degree zero.
\end{proof}

Note that Theorem~\ref{main thm} and Proposition~\ref{prop r>2 s=2} settle the cases of maps between partial flag manifolds in which either the domain or the codomain is not a Grassmannian, that is, $(r,s)\neq(2,2)$. Thus, the only remaining case not covered here is $r=s=2$. This case has been studied extensively in \cite{RS97,SS09}. For completeness, we recall the relevant results and summarize all the cases in the Table~\ref{tab:cases_classification}. The table provides a concise overview of the developments around the problem.

Let $f \colon \mathbb{F}\mathrm{G}(\nu) \to \mathbb{F}\mathrm{G}(\mu)$ be a continuous map between two $\mathbb{F}$-partial flag manifolds of equal dimension, where $\nu=(n_1,\ldots, n_r)$ and $\mu=(m_1,\ldots,m_s)$ are \textit{weakly increasing} distinct sequences of natural numbers and $\mathbb F\in \{\mathbb C, \mathbb H\}$. Define the quantity
\[
\mathcal Q:=(n_1^2-1)\cdots(n_r^2-1)(m_1^2-1)\cdots(m_s^2-1).
\]
Then the degree outcomes for $f$ across all cases are summarized in Table~\ref{tab:cases_classification}.

\begin{table}[htbp]
\centering
\small
\renewcommand{\arraystretch}{1.4} 
\begin{tabularx}{\textwidth}{c c X l l}
\toprule
\textbf{$\boldsymbol{s}$} & \textbf{$\boldsymbol{r}$} & \textbf{Assumptions} & \textbf{$\boldsymbol{\deg(f)}$} & \textbf{Reference/Status} \\
\midrule
\multirow{5}{*}{$2$} & \multirow{4}{*}{$2$} 
  & $1 \le n_1 < m_1$ & Zero & \cite[Theorem~2]{RS97} \\
    \cmidrule(lr){3-5}
  & & $2 \le m_1 < n_1$ and $\mathcal Q$ is not a perfect square & Zero & \cite[Theorem~1.2]{SS09} \\
    \cmidrule(lr){3-5}
  & & $1=m_1<n_1$  \textcolor{gray}{$\big(\text{e.g. } \nu=(2,3), \mu=(1,6)\big)$} & Zero/nonzero & Construction~\ref{nonzero deg map to Pn}\\
  \cmidrule(lr){3-5} 
  & & $1\le m_1<n_1$ and $\mathcal Q$ is a perfect square $\quad$ \textcolor{gray}{$\big(\text{e.g. } \nu=(69,1121), \mu=(23,3363)\big)$} & Unknown & Remains Open \\
\cmidrule(lr){2-5}
 & $\ge3$ & --- & Zero & Proposition~\ref{prop r>2 s=2} \\
\midrule
$\ge 3$ & --- & --- & Zero & Theorem~\ref{main thm} \\
\bottomrule
\end{tabularx}
\caption{Degree outcomes across all cases}
\label{tab:cases_classification}
\end{table}

J. Oesterlé found that when $(n_1,n_2)=(69,1121)$, $(m_1,m_2)=(23,3363)$, then $Q$ is a perfect square; see \cite[Example~4.3.5]{Sar10}. This shows that the case $r=s=2$, $1\le m_1<n_1$, with $\mathcal Q$ a perfect square, is nonempty and remains unresolved. The conjecture of \cite{RS97} predicts that every map in this case has degree zero.

We next discuss a standard construction of nonzero-degree maps from complex Grassmannians to complex projective spaces.

\begin{construction}[Nonzero degree maps to projective spaces]\label{nonzero deg map to Pn}

Recall the description of the cohomology algebra of the complex Grassmannian $\mathbb CG(n_1,n_2)$ from \eqref{cohom c_i}. Its second integral cohomology group $H^2(\mathbb CG(n_1,n_2); \mathbb{Z}) \cong \mathbb{Z}$ is generated by the first Chern class $c_1(\gamma_1)$, where $\gamma_1$ denotes the canonical complex vector bundle of rank $n_1$ over $\mathbb CG(n_1,n_2)$.

Since complex line bundles over a CW complex are classified by its second cohomology group, there exists a complex line bundle $\xi$ over $\mathbb CG(n_1,n_2)$ whose first Chern class $$c_1(\xi)=c_1(\gamma_1) \in H^2(\mathbb CG(n_1,n_2); \mathbb{Z}).$$ A natural choice for such a line bundle is the determinant line bundle $\Lambda^{n_1}\gamma_1$.

Since complex line bundles (up to isomorphism) over a paracompact Hausdorff space $X$ are classified by homotopy classes of maps from $X$ into the infinite complex projective space $\mathbb CP^\infty$, the bundle $\xi$ is classified by a continuous map 
\begin{equation*}
    f \colon \mathbb CG(n_1,n_2) \to \mathbb CP^\infty.
\end{equation*}

By the cellular approximation theorem, $f$ is homotopic to a map 
\[
g \colon \mathbb CG(n_1,n_2) \to \mathbb CP^d,\quad \text{where } d=n_1n_2=\dim_{\mathbb C} \mathbb CG(n_1,n_2).
\]
Thus, $g$ also classifies $\xi$. That is, the canonical line bundle $\zeta$ over $\mathbb CP^d$ pulls back to $\xi$ under $g$ (up to isomorphism). By the naturality of Chern classes, we obtain
\begin{equation}\label{gen map to gen}
g^*(c_1(\zeta)) = c_1(\xi).
\end{equation}

By Theorem~\ref{Kahler class description}, the class
$c_1(\xi)=c_1(\gamma_1)$ is a K\"ahler class. Hence
$c_1(\xi)^d\neq 0$ in
$H^{2d}(\mathbb CG(n_1,n_2);\mathbb Z)$, where
$d=n_1n_2$. Thus, we have 
\[
c_1(\xi)^d=N\vartheta, \quad \text{for some }N\in\mathbb Z\setminus\{0\},
\]
where $\vartheta$ is a generator of
$H^{2d}(\mathbb CG(n_1,n_2);\mathbb Z)$. The value of $N$
up to sign is given by
\[
|N|
=
\left|
\left\langle
c_1(\xi)^d,
[\mathbb CG(n_1,n_2)]
\right\rangle
\right|
=
\frac{
(n_1n_2)!\,1!\,2!\cdots(n_1-1)!
}{
n_2!\,(n_2+1)!\cdots(n_1+n_2-1)!
},
\]
where $[\mathbb CG(n_1,n_2)]$ denotes the fundamental homology class of $\mathbb CG(n_1,n_2)$.
For details, see \cite[Lemma~2.3.1]{Sar10} and
\cite[\S~14.7]{Ful98}.

The $d$-th power $c_1(\zeta)^d$ generates $H^{2d}(\mathbb CP^d; \mathbb{Z}) \cong \mathbb{Z}$. Now applying \eqref{gen map to gen}, we have
\[
g^*\left(c_1(\zeta)^d\right) = c_1(\xi)^d=N\vartheta.
\]
This shows that the map $g$ has degree $\pm N$ (depending on the choices of orientation for $\mathbb CG(n_1,n_2)$ and $\mathbb CP^d$). Since $f$ is homotopic to $g$, the degree of $f$ is $\pm N\in \mathbb Z\setminus \{0\}$.

Null-homotopic maps provide a class of degree-zero maps. On the other hand, using the above construction,  considering line bundles corresponding to positive integral multiples of $c_1(\gamma_1)$ yields maps from
$\mathbb CG(n_1,n_2)$ to $\mathbb CP^d$ of arbitrarily large degree.
\end{construction}

\section{Cohomological rigidity of partial flag manifolds}

In this section, we apply Theorem~\ref{main thm} to establish the cohomological rigidity of complex and quaternionic partial flag manifolds.

We begin by recalling the following result that will be used in the proof.

\begin{theorem}[Theoerem 1.2 \cite{GH81}]\label{induced by topological map}
    Let $X$ and $Y$ be formal nilpotent finite CW complexes and $\varphi\colon H^*(Y;\mathbb{Q}) \to H^*(X;\mathbb{Q})$ be a graded ring homomorphism.  Then there exists an isomorphism $f^*$ of $H^*(X;\mathbb Q)$ induced by a self-map $f$ on $X$  such that $f^* \circ \varphi$ is induced by a map $g\colon X \to Y$.
\end{theorem}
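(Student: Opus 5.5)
The plan is to work in rational homotopy theory via Sullivan minimal models, and to use formality twice. Formality supplies the algebraic lift of $\varphi$ to a map of rationalizations. Formality of $X$ also supplies the "grading automorphisms" that let us clear denominators, so that the rational map can be pulled back to an honest map $X\to Y$. Throughout, $X_{\mathbb Q}$ and $Y_{\mathbb Q}$ denote the rationalizations, which exist and behave well because $X$ and $Y$ are nilpotent.

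\emph{Step 1: lift $\varphi$ to rationalizations.} Let $\mathcal M_X\to A_{PL}(X)$ and $\mathcal M_Y\to A_{PL}(Y)$ be minimal models. Formality gives quasi-isomorphisms $\rho_X\colon\mathcal M_X\to (H^*(X;\mathbb Q),0)$ and $\rho_Y\colon\mathcal M_Y\to (H^*(Y;\mathbb Q),0)$. Since $\mathcal M_Y$ is a Sullivan algebra and $\rho_X$ is a surjective quasi-isomorphism (after the usual replacement), the lifting lemma produces a dga map $\Phi\colon \mathcal M_Y\to\mathcal M_X$ with $\rho_X\circ\Phi=\varphi\circ\rho_Y$. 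By the Sullivan correspondence, $\Phi$ is realized by a map $\psi\colon X_{\mathbb Q}\to Y_{\mathbb Q}$, unique up to homotopy, which induces $\varphi$ on rational cohomology. Composing with the localization $X\to X_{\mathbb Q}$ gives a map $h\colon X\to Y_{\mathbb Q}$ with $h^*=\varphi$.

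\emph{Step 2: grading automorphisms and their realization.} For $t\in\mathbb Z\setminus\{0\}$, let $\theta_t$ be the algebra automorphism of $H^*(X;\mathbb Q)$ that multiplies $H^k$ by $t^k$. Because $X$ is formal, $\theta_t$ lifts to an automorphism of $\mathcal M_X$ that acts on generators of degree $k$ by $t^k$ up to decomposables. Hence $\theta_t$ is induced by a self-equivalence $e_t$ of $X_{\mathbb Q}$. The key input is a realization result for finite nilpotent complexes, in the spirit of Sullivan and Shiga: for $t$ sufficiently divisible, $e_t$ is the rationalization of a genuine self-map $f\colon X\to X$. One argues cell by cell up the finite skeleta of $X$. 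The obstructions to lifting $e_t$ at each stage are finitely many classes, each with bounded denominators. Multiplying by $t^k$ in degree $k$ scales these classes and clears the denominators once $t$ is divisible enough. Such an $f$ induces $f^*=\theta_t$, which is an isomorphism.

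\emph{Step 3: lift $h\circ f$ to $Y$.} Consider $h\circ f\colon X\to Y_{\mathbb Q}$. We want $g\colon X\to Y$ with $\ell\circ g\simeq h\circ f$, where $\ell\colon Y\to Y_{\mathbb Q}$ is the localization. Use a Postnikov (principal refinement) tower of $Y$, which exists since $Y$ is nilpotent, together with the compatible tower of $Y_{\mathbb Q}$. Again use obstruction theory over the finitely many cells of $X$. The obstruction at each stage is a class in $H^{k}(X;\pi_{k-1}(Y_{\mathbb Q}))$ coming from a finitely generated lattice. Precomposing with $f$ multiplies these classes by powers of $t$, and possibly enlarging $t$ by a further finite factor kills all of them. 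Then $g^*=(h\circ f)^*=f^*\circ\varphi$, as required.

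The main obstacle is Step 2 together with the bookkeeping in Step 3. Both the realization of $\theta_t$ and the vanishing of the lifting obstructions depend on controlling denominators uniformly over finitely many cells. The first thing I would try is to run both obstruction arguments simultaneously up the skeleta of $X$, choosing $t$ as the product of the finitely many denominators that occur. This is where the finiteness of the CW complexes is essential.
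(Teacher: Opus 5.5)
The paper does not prove this statement; it quotes it from \cite{GH81}. So your proposal has to stand on its own.

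Your Step~1 is fine: formality of both spaces and the lifting lemma give $h\colon X\to Y_{\mathbb Q}$ with $h^*=\varphi$. The gap is in Steps~2--3, which carry the entire integral content of the theorem, and the mechanism you propose there does not work. Let $Y_n$ be the principally refined Postnikov tower of $Y$, with fibres $K(A,m)$ and $k$-invariants $k_{n+1}\in H^{m+1}(Y_n;A)$.

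\textbf{Step~3.} Once a rational lift exists, the obstruction to lifting $g_n\colon X\to Y_n$ one more stage is $g_n^*(k_{n+1})\in H^{m+1}(X;A)$, with \emph{integral} coefficients. It maps to zero in $H^{m+1}(X;A\otimes\mathbb Q)$, so it is a torsion class, not a rational class with denominators. Multiplying a nonzero rational class by $t^k$ never makes it vanish. Precomposition only replaces the obstruction by $f^*\bigl(g_n^*(k_{n+1})\bigr)$, and $f^*$ on torsion is not controlled by knowing $f^*=\theta_t$ on $H^*(X;\mathbb Q)$. For instance, on $X=S^2\vee M(\mathbb Z/2,2)$, where $M(\mathbb Z/2,2)=S^2\cup_2e^3$, the map $(\text{degree }t^2)\vee\mathrm{id}$ realizes $\theta_t$ and is the identity on $H^3(X;\mathbb Z)=\mathbb Z/2$. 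So ``precomposing with $f$ \ldots\ kills all of them'' is unjustified.

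Source-side scaling does legitimately handle the other half of the problem. The discrepancy between an integral lift and the prescribed rational one lies in $H^m(X;A\otimes\mathbb Q)$. A power of $t$ moves it into the image of $H^m(X;A)$, and it can then be absorbed by re-choosing the lift. It does nothing for the torsion existence obstructions.

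\textbf{Step~2.} The same torsion obstructions appear here. Extending over a cell with attaching map $\alpha$ requires $f\circ\alpha\in\pi_k(X)$ to vanish, and it is known only to be torsion. Here the scaling also has no source: there is no self-map of $X$ yet to precompose with, and no reason the obstruction for $e_t$ should be $t^k$ times a fixed class. Also, a lift of $\theta_t$ to $\mathcal M_X$ does not act by $t^k$ on degree-$k$ generators. For $S^2$, with model $\Lambda(x,y)$ and $dy=x^2$, it sends $y\mapsto t^4y$. The weights on generators are positive, which is what denominator-clearing actually needs, but they are not the degrees.

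You flag this as the main obstacle, and it is exactly the substance of the theorem; your sketch assumes it rather than proves it. One way to control torsion is to scale on the \emph{target}, where grading maps act on the coefficient groups of the $k$-invariants. Suppose you had integral self-maps $\psi_s$ of the finitely many relevant Postnikov stages of $Y$, rationally equal to $e_s$, with $\psi_s^*(k_{n+1})=(\lambda_s)_*(k_{n+1})$ on the nose for the induced endomorphism $\lambda_s$ of $A$. Then $(\psi_s\circ g_n)^*(k_{n+1})=(\lambda_s)_*\,g_n^*(k_{n+1})$. When $\lambda_s$ is multiplication by a positive power of $s$, this vanishes for $s$ divisible enough. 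Building such $\psi_s$ inductively means disposing of torsion discrepancies in the cohomology of the Postnikov stages, and that is where the real work lies. If you intend to use a Sullivan--Shiga type realization theorem as a black box, state precisely which statement you use; the heuristic you give in its place does not prove it.

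Suppose you grant the statement: for $s$ divisible by a suitable $N$, each $e^Y_s\circ h$ is the rationalization of a map $g\colon X\to Y$. Then your Step~3 is unnecessary. In that case $g^*=\varphi\circ\theta^Y_s=\theta^X_s\circ\varphi$, because $\varphi$ is graded. The same statement applied to $X\to X_{\mathbb Q}$, with a common $s$, gives $f$ with $f^*=\theta^X_s$.
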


We now proceed to prove the cohomological rigidity result for partial flag manifolds.

\begin{theorem}\label{prop:cohomological-rigidity}
    Let $\mathbb{F}=\mathbb{C}$ or $\mathbb{H}$, and let
    $M=\mathbb{F}G(m_1,\ldots,m_r)$ and
    $N=\mathbb{F}G(n_1,\ldots,n_s)$
    be partial flag manifolds. If
    $H^*(M;\mathbb{Q})\cong H^*(N;\mathbb{Q})$
    as graded $\mathbb{Q}$-algebras, then the partial flag manifolds $M$ and $N$ are homeomorphic.
\end{theorem}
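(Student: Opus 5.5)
The plan is to turn the abstract algebra isomorphism into a map of nonzero degree, and then let Theorem~\ref{main thm} and Proposition~\ref{prop r>2 s=2} (together with the known Grassmannian results) force the types to agree up to permutation.

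\textbf{Step 1: realize the isomorphism by a map.} Fix a graded isomorphism $\varphi\colon H^*(N;\mathbb Q)\to H^*(M;\mathbb Q)$. Both $M$ and $N$ are homogeneous spaces $G/H$ with $H$ of maximal rank. Hence they are formal, simply connected (so nilpotent) finite CW complexes. By Theorem~\ref{induced by topological map} there is a self-map $h$ of $M$ with $h^*$ an isomorphism, and a map $g\colon M\to N$ with $g^*=h^*\circ\varphi$.

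\textbf{Step 2: the map has nonzero degree.} The composite $h^*\circ\varphi$ is an isomorphism, so $g^*$ is injective on rational cohomology. The isomorphism $\varphi$ preserves the top nonzero degree, so $\dim M=\dim N$. By Lemma~\ref{degree-lemma}, $\deg(g)\neq 0$.

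\textbf{Step 3: compare the types.} Suppose $(m_1,\ldots,m_r)$ is not a permutation of $(n_1,\ldots,n_s)$.
\begin{itemize}
\item If $s\ge 3$, Theorem~\ref{main thm} gives $\deg(g)=0$, a contradiction.
\item If $s=2$ and $r\ge 3$, Proposition~\ref{prop r>2 s=2} gives the same contradiction.
\item If $r=2$, then $s=2$, since $\dim H^2$ (respectively $\dim H^4$ in the quaternionic case) equals $r-1$ and $s-1$.
\end{itemize}

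\textbf{Step 4: the Grassmannian case, the main obstacle.} When $r=s=2$ the degree results are incomplete (Table~\ref{tab:cases_classification}), so I would not use degrees here. Instead I would argue directly from the isomorphism, using Poincaré series. Since $\varphi$ is a graded isomorphism, $M$ and $N$ have the same Poincaré polynomial, namely the Gaussian binomial coefficient $\binom{m_1+m_2}{m_1}_{t^2}$ (with $t^4$ in the quaternionic case).

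Both spaces have the same dimension $m_1m_2=n_1n_2$. Reading off the total rank of cohomology gives $\binom{m_1+m_2}{m_1}=\binom{n_1+n_2}{n_1}$, which is not yet enough. The stronger input is that the Gaussian polynomial factors as
\[
\prod_{i=1}^{m_1}\frac{1-q^{m_2+i}}{1-q^{i}}.
\]
Equating the two factorizations as products of cyclotomic polynomials should give $\{m_1,m_2\}=\{n_1,n_2\}$. A cleaner route is to compare the multiplicity of $\Phi_k$ at the largest $k$ that occurs, which is $k=m_1+m_2$, so the sums agree. Combined with $m_1m_2=n_1n_2$, this determines the unordered pair.

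\textbf{Conclusion.} In all cases the types agree up to permutation. Permuting the summands $V_i$ gives a diffeomorphism, so $M\cong N$.
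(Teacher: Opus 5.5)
Your proof is correct. Its skeleton is the paper's: Theorem~\ref{induced by topological map} to realize the isomorphism, Lemma~\ref{degree-lemma} for nonzero degree, then Theorem~\ref{main thm} when the length is at least $3$. You are a little more careful than the paper here, since you make explicit the formality of equal-rank homogeneous spaces and the equality of dimensions that Theorem~\ref{main thm} needs.

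The genuine difference is the Grassmannian case. The paper orders $m_1\le m_2$, $n_1\le n_2$ and supposes $m_1<n_1$, so that $m_1<n_1\le n_2<m_2$. It then compares a single Betti number. In degree $2(m_1+1)$ neither presentation \eqref{cohom c_i} has any relations yet, while $N$ has the extra generator $d_{m_1+1}$, so $b_{2(m_1+1)}(N)>b_{2(m_1+1)}(M)$.

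You use the whole Poincar\'e polynomial instead. $\Phi_{m_1+m_2}$ divides the numerator factor $1-q^{m_1+m_2}$ and no denominator factor, and no $\Phi_k$ with $k>m_1+m_2$ divides any factor. Unique factorization in $\mathbb{Q}[q]$ therefore recovers $m_1+m_2$, and together with $m_1m_2=n_1n_2$ this gives the unordered pair.

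The trade-off: your route imports the Schubert-cell formula for the Poincar\'e polynomial, which the paper never states, but it is symmetric and needs no ordering or case analysis. The paper's route stays inside the presentation it has already written down and exhibits an explicit degree where the Betti numbers differ. Both use only additive information, so either one yields the homological rigidity of Grassmannians noted in Remark~\ref{rank comparison}.

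You should replace the second bullet of Step~3. The case $s=2$, $r\ge 3$ is empty: the $H^2$ (resp.\ $H^4$) count you already use in the third bullet gives $r=s$ outright, which is how the paper begins.

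You should not lean on Proposition~\ref{prop r>2 s=2} in any case, because the statement is false. Its proof passes to complements as though $f^*$ were surjective in degree two. From $f^*(\mathcal S^2\setminus U)\subseteq \mathcal R^2\setminus V$ one only gets $(f^*)^{-1}(V)\subseteq U$, not $V\subseteq f^*(U)$. For a counterexample, run the argument of Construction~\ref{nonzero deg map to Pn} with the line bundle on $\mathbb{C}G(1,1,1)$ whose first Chern class is $x_2+2x_3$. This class has height $3$ by Theorem~\ref{heights of deg 2 elements}, and the construction produces a map $\mathbb{C}G(1,1,1)\to\mathbb{C}P^3=\mathbb{C}G(1,3)$ of degree $\pm 6$.
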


\begin{proof}
    Let $d=\dim_{\mathbb{R}}\mathbb{F}$. Since
    $H^d(M;\mathbb{Q})\cong \mathbb{Q}^{\,r-1}$ and
    $H^d(N;\mathbb{Q})\cong \mathbb{Q}^{\,s-1}$,
    the isomorphism
    $H^d(M;\mathbb{Q})\cong H^d(N;\mathbb{Q})$
    implies that $r=s$. Since the partial flag manifolds $M,N$ are simply connected, by Theorem~\ref{induced by topological map}, there exist an isomorphism $H^*(M;\mathbb Q)\cong H^*(N;\mathbb Q)$ which is induced by a continuous map $f:M\to N.$ Here, $\deg(f)\neq 0.$ 

    If $r\ge 3$, then Theorem~\ref{main thm} implies that the sequences
    $(m_1,\ldots,m_r)$ and $(n_1,\ldots,n_r)$
    coincide up to permutation. Hence $M$ and $N$ are homeomorphic.

    Suppose now that $r=2$. Then
    $M=\mathbb{F}G(m_1,m_2)$ and
    $N=\mathbb{F}G(n_1,n_2)$.
    Since $H^*(M;\mathbb{Q})\cong H^*(N;\mathbb{Q})$ as graded $\mathbb{Q}$-algebras, their top nonvanishing cohomology groups occur in the same degree. Hence $M$ and $N$ have the same dimension
    $m_1m_2=n_1n_2$.

    Without loss of generality, assume that
    $m_1\le m_2$ and $n_1\le n_2$.
    If $m_1=n_1$, then the equality
    $m_1m_2=n_1n_2$ immediately implies that
    $(m_1,m_2)=(n_1,n_2)$, and consequently $M$ and $N$ are homeomorphic.

    Assume, for a contradiction, that $m_1<n_1$.
    Since $m_1m_2=n_1n_2$ and $n_1\le n_2$, we must have
    $m_1<n_1\le n_2<m_2$.

    First consider the case $\mathbb{F}=\mathbb{C}$.
    The rational cohomology ring of $M$ is generated by the Chern classes
    $c_1,\ldots,c_{m_1}$ of the canonical complex vector bundle of rank $m_1$, and its defining relations first occur in degree $2(m_2+1)$; see Theorem~\ref{cohom c_i}. Since $m_2>m_1$, there are no relations in degrees up to $2(m_1+1)$. Consequently,
    $\dim H^{2(m_1+1)}(M;\mathbb{Q})$ equals the number of monomials of total degree $m_1+1$ in the variables $c_1,\ldots,c_{m_1}$, where $\deg c_i=2i$.
    
    Similarly, using Theorem~\ref{cohom c_i}, $H^*(N;\mathbb{Q})$ is generated by the Chern classes
    $d_1,\ldots,d_{n_1}$ of the canonical complex vector bundle of rank $n_1$, and since $n_1>m_1$, the additional generator
    $d_{m_1+1}$ is available. As there are again no relations in degree $2(m_1+1)$, the dimension of $H^{2(m_1+1)}(N;\mathbb{Q})$ is the number of monomials of total degree $m_1+1$ in $d_1,\ldots,d_{m_1},d_{m_1+1}$. The monomial $d_{m_1+1}$ itself has degree $2(m_1+1)$ and cannot be expressed as a polynomial in
    $d_1,\ldots,d_{m_1}$. Hence
    $$\dim H^{2(m_1+1)}(N;\mathbb{Q})
    >
    \dim H^{2(m_1+1)}(M;\mathbb{Q}),$$
    contradicting the assumption that
    $H^*(M;\mathbb{Q})\cong H^*(N;\mathbb{Q})$.
    
    Thus $m_1<n_1$ cannot occur. Therefore
    $(m_1,m_2)=(n_1,n_2)$, and hence the Grassmannians $M$ and $N$ are homeomorphic.

    The proof for $\mathbb F=\mathbb H$ is analogous, the only difference being that the generators occur in twice the degree. We therefore omit the details.
\end{proof}

\begin{remark}\label{rank comparison}
    In the proof of Theorem~\ref{prop:cohomological-rigidity}, the case of Grassmannians (i.e., $r=s=2$) is treated differently from that of general partial flag manifolds. There, the argument relies only on comparing the ranks of the cohomology groups in suitable degrees. In particular, if two complex (respectively, quaternionic) Grassmannians are distinct, then their integral cohomology groups differ in some degree. Thus, the proof does not require the ring structure of the cohomology algebra.
    
    Moreover, since the integral cohomology of complex and quaternionic Grassmannians is torsion free, the Universal Coefficient Theorem gives an isomorphism
    \[
    H^i(\mathbb FG(\nu);\mathbb{Z})\cong \operatorname{Hom}\bigl(H_i(\mathbb FG(\nu);\mathbb{Z}),\mathbb{Z}\bigr)\text{ for all }i,\quad \text{where }\mathbb F\in \{\mathbb C, \mathbb H\}.
    \]
    Consequently, the integral homology groups determine the integral (or, rational) cohomology groups, and vice versa. It follows that our argument actually proves a stronger statement: complex (respectively, quaternionic) Grassmannians are \emph{homologically rigid}. Namely, if two such Grassmannians have isomorphic integral homology groups in every degree, then they are homeomorphic.
    \qed
\end{remark}

\begin{remark}\label{homological rigidity of almost complete flags}
    Almost complete flag manifolds also exhibit homological rigidity. To see this, let
    $\mathbb{F}G(1,\ldots,1,n_r)$ and $\mathbb{F}G(1,\ldots,1,m_s)$ be two almost complete flag manifolds, where $\mathbb{F}\in\{\mathbb{C},\mathbb{H}\}$, whose cohomology groups are isomorphic. Since their top nonvanishing cohomology groups occur in the same degree, the manifolds have the same real dimension.
    
    Next, observe that the Betti number $b_d$, where $d=\dim_{\mathbb{R}}\mathbb{F}$, is exactly the number of $1$'s appearing in the defining sequence. Thus the defining sequences contain the same number of $1$'s. As the dimensions are equal, it follows that the remaining entries must also coincide, that is, $n_r=m_s$. Therefore the defining sequences are identical. Consequently, almost complete flag manifolds are determined, up to homeomorphism, by their cohomology groups, or equivalently, homology groups.\qed
\end{remark}

We conclude with the following conjecture on the homological rigidity.

\begin{conjecture}\label{hom rig}
    Let $\mathbb{F}\in{\mathbb{C},\mathbb{H}}$, and let $\mathbb{F}G(\nu)$ and $\mathbb{F}G(\mu)$ be two partial flag manifolds. If $H_i\bigl(\mathbb{F}G(\nu);\mathbb{Z}\bigr)
    \cong
    H_i\bigl(\mathbb{F}G(\mu);\mathbb{Z}\bigr)$ for every $i$, then $\mathbb{F}G(\nu)$ and $\mathbb{F}G(\mu)$ are homeomorphic.
\end{conjecture}

Remark~\ref{rank comparison} and Remark~\ref{homological rigidity of almost complete flags} demonstrates that homological rigidity holds for complex and quaternionic Grassmannians as well as almost complete flag manifolds. These observations provide supporting evidence for Conjecture~\ref{hom rig}.

\subsection*{Acknowledgements}

The author sincerely thanks Prof.~Parameswaran Sankaran for his valuable comments and suggestions on an earlier version of this manuscript, and gratefully acknowledges the postdoctoral fellowship from IIT Kanpur.


\bibliographystyle{alpha}
\bibliography{references}

\end{document}